\newcommand{\R}{\mathbb R}
\newtheorem{theorem}{Theorem}[section]
\newtheorem{lemma}[theorem]{Lemma}
\newtheorem{proposition}[theorem]{Proposition}
\newtheorem{introthm}{Theorem}
\newtheorem{introcorollary}[introthm]{Corollary}
\theoremstyle{remark}
\newtheorem{remark}[theorem]{Remark}
\newcommand{\eps}{\varepsilon}
\numberwithin{equation}{section}
\title{Monotonicity of the Liouville entropy\\ along the Ricci flow on surfaces}
\author{Karen Butt, Alena Erchenko, Tristan Humbert, and Daniel Mitsutani}
\date{}
\begin{document}

\begin{abstract} 
We show that the Liouville entropy of the geodesic flow of a closed surface of non-constant negative curvature is eventually strictly increasing along the normalized Ricci flow (NRF). More precisely, we obtain a new expression for the derivative of the Liouville entropy along an arbitrary conformal deformation in dimension 2, and we prove it is positive in the direction of the NRF for 1/6-pinched metrics. This partially answers a question of Manning from 2004. 
In addition, we show that the mean root curvature, a purely geometric quantity which is a lower bound for the Liouville entropy, is strictly increasing along the NRF starting from any metric of non-constant negative curvature.
\end{abstract}

\maketitle

\section{Introduction}

Let $(M, g)$ be a closed negatively curved surface, and let $h_{\rm Liou}(g)$ denote its \emph{Liouville entropy}, i.e., the measure-theoretic entropy of the geodesic flow on its unit tangent bundle 
$S^g M$ 
with respect to the Liouville measure.
In this paper, we partially answer
a question raised by Manning \cite[Question 3]{Man_Ricci} about
the monotonicity of $h_{\rm Liou}(g)$ along the normalized Ricci flow on the space of negatively curved metrics on $M$. Recall that for $\alpha>0$, a negatively curved metric is $\alpha$-\emph{pinched} if there is $C>0$ such that $-C\leq K_g\leq -\alpha C$ where $K_g$ denotes the Gaussian curvature of $g$. 

\begin{introthm}
\label{theo:Liou}
Let $M$ be a smooth closed orientable surface of negative Euler characteristic. 
Let $g_0$ be a smooth Riemannian metric on $M$ of non-constant negative Gaussian curvature, and suppose that $g_0$ is $1/6$-pinched.
Let $\eps \mapsto g_{\eps}$ denote the normalized Ricci flow starting from $g_0$.  Then $$\eps \mapsto h_{\rm Liou}(g_{\eps}) \text{ is strictly increasing for all }\eps \geq 0.$$ 
\end{introthm}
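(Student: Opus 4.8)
The plan is to reduce Theorem~\ref{theo:Liou} to an explicit first‑derivative formula that holds for all area‑preserving conformal deformations, and then to recognize the right‑hand side of that formula as a manifestly non‑negative quantity which is strictly positive exactly when the curvature is non‑constant. First, the two elementary facts that make the Ricci flow fit this picture. In dimension two the normalized Ricci flow is the conformal flow $\partial_\eps g_\eps=-2\bigl(K_{g_\eps}-\bar K\bigr)g_\eps$, and the normalization is chosen so that $\mathrm{Area}(g_\eps)$ — equivalently, by Gauss--Bonnet, $\bar K:=2\pi\chi(M)/\mathrm{Area}(g_\eps)$ — is independent of $\eps$; thus $g_\eps=e^{2\varphi_\eps}g_0$ with $\partial_\eps\varphi_\eps=\bar K-K_{g_\eps}$ and $\int_M(\bar K-K_{g_\eps})\,dA_{g_\eps}=0$. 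Moreover negative non‑constant curvature persists: $K_{g_\eps}<0$ for all $\eps$ by the maximum principle applied to $\partial_\eps K_{g_\eps}=\Delta_{g_\eps}K_{g_\eps}+2K_{g_\eps}(K_{g_\eps}-\bar K)$ (classical for the two‑dimensional Ricci flow), and $K_{g_\eps}$ is non‑constant for every finite $\eps$ because constant‑curvature metrics are the only fixed points of the flow and it has backward uniqueness. Hence it suffices to prove $\tfrac{d}{d\eps}h_{\mathrm{Liou}}(g_\eps)>0$ assuming only that $K_{g_\eps}$ is non‑constant.

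Next I would set up the variation on a fixed phase space. Since the Liouville measure $\mu_g$ is the smooth SRB measure of the volume‑preserving Anosov geodesic flow, Pesin's entropy formula gives $h_{\mathrm{Liou}}(g)=\int_{S^gM}u^+_g\,d\mu_g$, where $u^+_g>0$ is the unstable (positive) solution of the scalar Riccati transport equation $X_gu^+_g+(u^+_g)^2+K_g=0$ along the geodesic vector field $X_g$. I would transport everything to the fixed manifold $S^{g_0}M$ by the fiberwise rescaling $\Theta_\eps\colon v\mapsto e^{-\varphi_\eps}v$, obtaining an $\eps$‑family $(X_\eps,K_\eps,\mu_\eps,u_\eps)$ on $S^{g_0}M$ with $X_0=X$, $\mu_0=\mu_{g_0}$, $u_0=u^+_{g_0}$, and whose $\eps$‑derivatives $\dot X,\dot K$ are explicit differential expressions in $f:=\bar K-K_{g_0}$ coming from the standard conformal‑change formulas for the geodesic spray. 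Then $\frac{d}{d\eps}\bigr|_{\eps=0}h_{\mathrm{Liou}}(g_\eps)=\int\dot u\,d\mu_0+\frac{d}{d\eps}\bigr|_{\eps=0}\!\int u_0\,d\mu_\eps$. Linearizing Riccati gives $(X+2u_0)\dot u=-(\dot X u_0+\dot K)$, so $\dot u=-(X+2u_0)^{-1}(\dot Xu_0+\dot K)$, the inverse being well defined because the strictly positive potential $2u_0$ makes the forward‑time Duhamel integral for $(X+2u_0)^{-1}$ absolutely convergent. For the measure term I would use a linear‑response (Ruelle‑type) identity of the schematic form $\frac{d}{d\eps}\bigr|_{\eps=0}\!\int\psi\,d\mu_\eps=\int_0^{\infty}\!\int_{S^{g_0}M}(\mathrm{div}_{\mu_0}\dot X)\,(\psi\circ\phi_t)\,d\mu_0\,dt$ (up to sign and time‑orientation conventions); here the microlocal input is essential — the Faure--Sj\"{o}strand / Dyatlov--Zworski anisotropic spaces, exponential mixing of the contact Anosov flow $\phi_t$ (a spectral gap for the Pollicott--Ruelle resolvent $(X+s)^{-1}$), and simplicity of the resonance at $s=0$ — both to make the $t$‑integral converge and to justify differentiating under the phase‑space integral despite $u^+_g$ being only finitely regular transversally to the flow.

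Finally I would simplify: integrate by parts against $X$, use $Xu_0+u_0^2+K_0=0$ and $\mathrm{div}_{\mu_0}X=0$, and use $\int_Mf\,dA_{g_0}=0$. The general formula so obtained is linear in $f$; specializing to the Ricci‑flow direction $f=\bar K-K_{g_0}$ and simplifying, I expect the answer to collapse to a sum of non‑negative terms: a positive geometric term — of the same nature as the derivative of the mean root curvature $\frac{1}{\mathrm{Area}}\int_M\sqrt{-K}$, which equals $\int_M\frac{|\nabla\sqrt{-K_0}|^2}{\sqrt{-K_0}}\,dA_{g_0}$ plus a Chebyshev‑type non‑negative term and hence is positive unless $K_0$ is constant — together with a dynamical term that is a variance $\mathrm{Var}_{\mu_0}$ of an explicit H\"older observable $w$, equivalently $\int_0^{\infty}\langle w,w\circ\phi_t\rangle_{\mu_0}\,dt$, which is of positive type. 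Each such term vanishes only if the relevant function is constant, respectively an $L^2$‑coboundary for the geodesic flow; evaluating the latter along closed geodesics (Anosov closing lemma, or injectivity of the geodesic X‑ray transform on a negatively curved surface) forces $\nabla K_0\equiv0$. Combined with the persistence of non‑constant curvature from the first step, this gives $\frac{d}{d\eps}h_{\mathrm{Liou}}(g_\eps)>0$ for all $\eps\ge0$, i.e.\ strict monotonicity.

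The main obstacle is the rigorous execution of the second paragraph: $u^+_g$ is only finitely regular transversally to the flow, and its variation $\dot u$ together with the Liouville‑measure variation are of genuinely distributional / linear‑response type, so one cannot differentiate under the integral sign naively. Identifying the correct anisotropic functional spaces, ruling out extra Pollicott--Ruelle resonances in the relevant half‑plane uniformly along the family of conformally perturbed flows, and verifying continuity in $\eps$ of every pairing of these distributions against the smooth density $d\mu_0$ is the technical heart. A secondary difficulty is the final algebraic reorganization of the formula into a sum of squares/variances; I expect this to work precisely because the perturbation direction $\bar K-K_0$ is the curvature ``gradient'', which should produce the coboundary/variance structure characteristic of a gradient flow.
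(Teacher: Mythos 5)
Your overall strategy --- differentiate the Riccati/Pesin representation $h_{\rm Liou}=\int u^+\,d\mu$ on a fixed phase space, handle the limited transverse regularity of the Riccati solution with anisotropic spaces and the Pollicott--Ruelle resolvent, and then exhibit positivity of the resulting linear functional in the direction $\bar K-K_0$ --- is the same in outline as the paper's. But the proposal has a genuine gap at its center: the explicit first-derivative formula is never derived, and the structure you predict for it is not what the computation actually produces. The paper's answer is the single clean term
\begin{equation*}
\left.\frac{d}{d\eps}\right|_{\eps=0}h_{\rm Liou}(\eps)=-\tfrac12\int_{SM}\dot\rho_0\,w^s\,dm ,
\end{equation*}
and for $\dot\rho_0=-(K_0-\bar K)$ its positivity does \emph{not} come from a dynamical variance $\int_0^\infty\langle w,w\circ\phi_t\rangle\,dt$, nor from a coboundary argument settled by the X-ray transform or the closing lemma, nor from a separate geometric term of mean-root-curvature type. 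It comes from a purely scalar manipulation: multiplying the Riccati equation by $w^s$ and using flow-invariance of $m$ turns $\tfrac12\int(K_0-\bar K)w^s\,dm$ into $\tfrac12\int F^2\bigl(F-\int F\,dm\bigr)dm$ with $F=-w^s>0$, which is non-negative (with equality iff $F$ is a.e.\ constant, whence $K_0$ is constant via Riccati) by an elementary Chebyshev/Jensen-type inequality. You explicitly flag both the linear-response justification and the ``final algebraic reorganization into a sum of squares/variances'' as open difficulties; since the second of these is the actual content of the theorem and your anticipated mechanism for it is not the one that works, the proposal does not yet constitute a proof.

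Two secondary remarks. First, the Ruelle linear-response identity you invoke for $\frac{d}{d\eps}\int\psi\,d\mu_\eps$ is unnecessary here: after the fiberwise identification, the Liouville measure of $e^{2\rho_\eps}g_0$ is simply $e^{2\rho_\eps}$ times that of $g_0$ (area scales conformally, angles do not), so its variation is $2\dot\rho_0\,dm_0$ --- importing SRB linear response would only add unjustified technical burden. Second, the genuinely hard analytic step is not the measure term but the term coming from $\dot X$ (the variation of the geodesic spray), whose divergence produces half-orbit integrals of $e^s$-derivatives; the paper handles these via Ledrappier--Shu's differentiability theorem, a leafwise integration by parts against Hamenst\"adt's conditional measures, and the Faure--Guillarmou commutation $e^sR_-(\lambda)=R_{X+w^s}(\lambda)e^s$ at the resonance $\lambda=0$. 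Your sketch does not indicate how you would carry out or close this part of the computation.
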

We recall that in dimension 2, the normalized Ricci flow (NRF) is given by
\begin{equation}\label{eq:RicciFlow} 
\frac{\partial}{\partial \eps}g_\eps = -2(K_\eps-\bar{K})g_\eps,
\end{equation}
where
$K_\eps$ is the Gaussian curvature of $g_\eps$ and $\bar {K}$ is its average value, which is independent of $\eps$ by Gauss--Bonnet.
Hyperbolic metrics, i.e., metrics of constant Gaussian curvature, are fixed by the Ricci flow; for metrics of non-constant curvature, \eqref{eq:RicciFlow} defines a conformal family of negatively curved metrics $\eps \mapsto g_{\eps}$ of fixed area converging exponentially fast (in the $C^k$ norm for any $k\geq 0$) to a hyperbolic metric (of constant curvature $\bar{K}$) as $\eps \to \infty$ \cite[Theorem 3.3]{hamilton1988ricci}. 
 In particular, given any 
 metric $g_0$, the curvatures $K_{\eps}$ converge uniformly to $\bar{K}$ when $\eps\to \infty$, so that $g_\eps$ is eventually $1/6$-pinched. Hence, Theorem \ref{theo:Liou} implies
\begin{introcorollary}
\label{corr:corr}
   Let $M$ be a smooth closed orientable surface of negative Euler characteristic. 
Let $g_0$ be a smooth Riemannian metric on $M$ of non-constant negative Gaussian curvature. Then there is a $T>0$, such that  
$$\eps \mapsto h_{\rm Liou}(g_{\eps}) \text{ is strictly increasing for all }\eps \geq T.$$
\end{introcorollary}
\begin{remark}
Since $g_0$ is a negatively curved metric on a closed surface, there exists $\alpha$ such that $g_0$ is $\alpha$-pinched. 
    Let $(g_\eps)_{\eps\geq 0}$ be the NRF starting from $g_0.$ For $\eps\geq 0$, let $K_{\mathrm{min}}(\eps)$ (resp. $K_{\mathrm{max}}(\eps)$) denote the minimum (resp. maximum) of $K_\eps$. By the proof of \cite[Theorem 3.3]{hamilton1988ricci}, we have $$\tfrac{d}{d\eps}(K_{\mathrm{max}}(\eps)-K_{\mathrm{min}}(\eps))\leq K_{\mathrm{max}}(0)(K_{\mathrm{max}}(\eps)-K_{\mathrm{min}}(\eps)).$$ 
As a consequence, $g_\eps$ is $1/6$-pinched for all $\eps\geq T$ where $T=\ln\left(5\alpha/6\right)/K_{\mathrm{max}}(0)$.
\end{remark}

Manning considered the variation of the \emph{topological entropy} along the normalized Ricci flow, proving it decreases starting from any negatively curved metric \cite{Man_Ricci}.
The topological entropy is an invariant which describes the total exponential complexity of a dynamical system, whereas the Liouville entropy encodes its average complexity with respect to the Liouville (volume) measure.
Katok proved that Liouville entropy (resp. topological entropy) is maximized (resp. minimized) at hyperbolic metrics among negatively curved metrics of the same area \cite[Theorem B]{Ka}. Moreover, he showed
these two entropies coincide if and only if the metric $g$ is hyperbolic \cite[Corollary 2.5]{Ka}.

 We note the latter result follows for $1/6$-pinched metrics from Theorem \ref{theo:Liou} and \cite[Theorem 1]{Man_Ricci}. 
Indeed, if $g$ is $1/6$-pinched but not hyperbolic, the difference $h_{\rm top}(g) - h_{\rm Liou}(g)$ is \emph{strictly} decreasing along the Ricci flow. On the other hand, the variational principle states $h_{\rm top}(g) - h_{\rm Liou}(g) \geq 0$, so the inequality must be strict.

\subsection*{Mean root curvature}
Our next result concerns a geometric invariant introduced by Manning \cite{Man2} known as the \textit{mean root curvature}, which is defined for a negatively curved metric $g$ on a closed surface $M$ by
\begin{equation}
\label{eq:mrc_def}
\kappa(g): =\frac{1}{A(g)}\int_{M}\sqrt{-K_g}\, dA_g,
\end{equation}
where $dA_g$ is the Riemannian area form of $g$ and $A(g)=\int_M dA_g$ is the total area. 

The mean root curvature is small for metrics which concentrate curvature in regions of small area, and is maximized strictly at metrics of constant negative curvature by Jensen's inequality and the Gauss--Bonnet theorem.  In addition, by \cite[Theorem 2]{Man2} and \cite[Corollary 1]{Sar}, it provides a lower bound for the Liouville entropy: $\kappa(g) \leq h_{\rm Liou}(g)$. Moreover, the equality holds if and only if $g$ has constant negative Gaussian curvature \cite{OS84}. 

Since the mean root curvature is a purely geometric invariant related to the concentration of Gaussian curvature and to the Liouville entropy, it is natural to ask if it is also strictly increasing along the Ricci flow. We prove that this is indeed the case.

\begin{introthm}
\label{theo:meancurv}
Let $M$ be a smooth closed orientable surface of negative Euler characteristic. 
Let $g_0$ be a smooth Riemannian metric on $M$ of non-constant negative Gaussian curvature.
Let $\eps \mapsto g_{\eps}$ denote the normalized Ricci flow starting from $g_0$. 
Then $$\eps \mapsto \kappa(g_{\eps}) \text{ is strictly increasing for all }\eps \geq 0.$$ 
\end{introthm}

\subsection*{Strategy of the proofs}

 In Section \ref{section:meancurv}, we prove  Theorem \ref{theo:meancurv} by first finding the derivative of $\kappa(g)$ along an arbitrary conformal perturbation (Proposition \ref{meancurv-conf}). We then deduce positivity of this derivative along the Ricci flow using a Jensen-type inequality (Lemma  \ref{lemma: integral inequality}).

In Section~\ref{sec:Liou}, we prove Theorem \ref{theo:Liou}. The key ingredient is a new formula for the derivative of the Liouville entropy along an arbitrary area-preserving conformal  perturbation of a negatively curved metric on a surface. The formula involves integrals of functions over forward infinite orbits of the geodesic flow, which we call \emph{half-orbit integrals}; see \eqref{eq:defIf} for the precise definition.

\begin{introthm}
\label{thm:conf-deriv}
Let $(M,g_0)$ be a smooth closed negatively curved surface. Denote by $m$ the Liouville measure on its unit tangent bundle $SM$. Let $g_{\epsilon}=e^{2\rho_{\epsilon}}g_0$ be a $C^{\infty}$ area-preserving conformal perturbation of $g_0$ and write  $\dot{\rho}_0=\tfrac{d}{d\epsilon}|_{\epsilon=0}\, \rho_{\epsilon}$. 
Then
\begin{equation}
\label{eq:formula}
    \left. \frac{d}{d \eps} \right|_{\eps = 0} h_{\rm Liou}(g_\eps)=  - \int_{SM}\dot{\rho}_0 w^sdm  - 2 \int_{SM} V(w^s) I_{\dot \rho_0}dm,
\end{equation}
where $- w^s(v)$ is the geodesic curvature of the stable horocycle determined by $v$ $($see \eqref{eq:2ndff}$)$, $V$ is the vertical vector field $($see \eqref{eq:V}$)$, and $I_{\dot{\rho}_0}$ is the half-orbit integral defined by $\dot{\rho}_0$ $($see \eqref{eq:defIf}$)$. 
\end{introthm}

To prove Theorem \ref{thm:conf-deriv}, we begin with the well-known fact that, in negative curvature, the Liouville entropy can be expressed as the average, with respect to the Liouville measure, of the mean curvature of horospheres (see (\ref{eq:hDivX})). 
This was used by Knieper--Weiss to show the Liouville entropy varies smoothly with respect to the metric for negatively curved surfaces \cite{KniWei}. 

In this paper, we use that the mean curvature of a horosphere is in turn equal to the Laplacian of the corresponding Busemann function, and can hence be expressed as the divergence of a vector field closely related to the geodesic spray. 
This formulation of the mean curvature was used by Ledrappier--Shu in \cite{LedShu_conformal, LedShu_general} to study the differentiability of the linear drift. In Section \ref{sub:conf}, we differentiate the horospherical mean curvature using their methods.
A key tool, in both their work and ours, is a slightly non-standard decomposition of the unit tangent bundle of the universal cover $\tilde M$ as the product of $\tilde M$ with $\partial \tilde M$, the visual boundary at infinity.
As a consequence of this perspective, integrals of certain functions along half-infinite orbits of the geodesic flow appear naturally in the computations.  

As shown in Section \ref{sec:half}, these half-orbit integrals satisfy a certain differential equation, which allows us to to simplify our derivative formula enough to prove Theorem \ref{theo:Liou} for metrics with 1/6-pinched sectional curvature in Section \ref{sub:specRicci}.
More precisely, as in the proof of Theorem C, we deduce positivity of the first term in \eqref{eq:formula} using Lemma \ref{lemma: integral inequality}, whereas the hypothesis on the pinching is used only at the very end to show the positivity of the second term. 

\subsection*{Acknowledgments}
This project began while the authors attended an American Institute of Mathematics (AIM) SQuaRE workshop in September 2024. We would like to thank AIM for this opportunity and for their hospitality during our stay. 
We thank Fran\c{c}ois Ledrappier for answering our questions about the works \cite{LedShu_conformal, LedShu_general}.
We also thank Colin Guillarmou, Thibault Lefeuvre and
Amie Wilkinson for comments on an earlier draft of this paper.

The first author was supported by NSF grant DMS-2402173. The second author was supported by NSF grant DMS-2247230.
The third author was supported by the European Research Council (ERC) under
the European Union’s Horizon 2020 research and innovation programme (Grant agreement no. 101162990 — ADG).

\section{Preliminaries}
In Section \ref{secSM} we record standard facts on the geometry of the unit tangent bundle of a surface, and in Section \ref{secAnosov} we describe the stable and unstable distributions of the geodesic flow in negative curvature. In Section \ref{secDiv}, we recall that in our setting, the Liouville entropy has a geometric formulation as the average of the mean curvatures of horospheres (see \eqref{eq:hDivX}), which is the starting point of our proof of Theorem \ref{thm:conf-deriv}. 
In Section \ref{secInt}, we record a Jensen-type integral inequality that is used in the proofs of Theorems \ref{theo:Liou} and \ref{theo:meancurv}.

\subsection{Geometry of surfaces}
\label{secSM}
In this section, we recall some basic facts about the geometry of  surfaces and establish some notation. For a textbook account of all these notions, we refer to \cite[Chapter 1]{Pat} and \cite[Chapter 2]{DGRbook}. 

\subsubsection{Geodesic flow and Liouville measure}\label{subsub:Liou-meas}
Consider a smooth closed surface $M$ equipped with a smooth Riemannian metric $g$. Let $K_g$ denote the sectional curvature of $g$. We will denote by $dA_g$ the Riemannian area form defined by $g$ on $M$. Let $A(g) = \int_MdA_g$ be the total area. 

Let $SM = \{(x,v)\in TM\mid \|v\|_g=1\}$ denote the unit tangent bundle of $g$.  
The pair $(M,g)$ defines a dynamical system on $SM$ called the \emph{geodesic flow}:
\begin{equation}
\label{eq:phit}
\varphi_t:(x, v) \mapsto (\gamma_v(t), \dot \gamma_v(t)),
\end{equation}
where $t\mapsto \gamma_v(t)$ is the (projection on $M$) of the unique geodesic passing through $x$ at time $t=0$ with velocity $v$. 
The \emph{geodesic spray} is the vector field generating $\varphi_t$, i.e., 
\begin{equation}
    \label{eq:X}
    X(x,v):=\frac{d}{dt}\Big|_{t=0}\varphi_t(x,v)\in C^{\infty}(SM,T(SM)).
\end{equation}

The metric $g$ induces a probability measure on $SM$ called the \emph{Liouville measure}, which we will denote by $m = m_g$.  
This measure has a concrete description which is compatible with the sphere-bundle structure of $SM$: it is locally given (up to a multiplicative constant) by the product of the Riemannian area $dA_g$ on the base $M$, together with the spherical Lebesgue measure (arclength) on the circular fibers. 
This measure also turns out to be geodesic-flow--invariant, as we will discuss below. In summary, the metric $g$ defines a measure-preserving dynamical system $(SM, \varphi_t, m)_g$.

\subsubsection{Horizontal and vertical vector fields}
We now recall a geometric splitting of the unit tangent bundle $T(SM)$. 
We define a vertical vector field as follows.
An oriented Riemannian surface admits a complex structure. 
This means that there is a section $J\in \mathrm{End}(TM)$ satisfying $J^2=-\mathrm{Id}$, and such that the area form associated to $g$ is given by $dA_g=g(J\cdot,\cdot)$. 
One defines a rotation in the fiber by
$$\rho_{\theta}:SM\to SM,\quad \rho_\theta(x,v)=(x,e^{\theta J}v),   $$
where $e^{\theta J}v$ is the unit vector obtained by rotating $v$ by an angle $\theta$ in the positive direction (with respect to the orientation of $M$).
The \emph{vertical vector field} $V$ is the generator of $\rho_\theta$:
\begin{equation}
\label{eq:V}
V:=\left.\frac{d}{d\theta}\right|_{\theta=0}\rho_\theta\in C^{\infty}(SM,T(SM)).
\end{equation}
By the definition of $\rho_\theta$, we see that $V(x,v)$ is the vertical lift of $Jv\in T_xM.$
Next, we define the horizontal vector field $H:=[V,X]$. We will use the following important commutation relations; see for instance \cite[Lemma 15.2.1]{Lef},
\begin{equation}
\label{eq:comm}
H=[V,X], \quad [H,V]=X, \quad [X,H]=K_gV.
\end{equation}
One defines the \emph{Sasaki metric} $g_{\mathrm{Sas}}$ on $T(SM)$ by declaring $(X,V,H)$ to be a global orthonormal frame. This metric defines a (normalized) Riemmannian volume form on $SM$ which coincides with the \emph{Liouville measure} $m_g$ defined above, see \cite[Lemma 1.30]{GuMaz}.
One can also show (see \cite[Exercise 1.33]{Pat}) that there is a contact structure on $SM$ for which $X$ is the Reeb vector field and $m_g$ is the Liouville form. In particular, we deduce the important property that the Liouville measure is $\varphi_t$-invariant. Moreover, the Liouville measure is invariant with respect to $H$ and $V$, see \cite[Proposition 1.47]{GuMaz}. In other words,
\begin{equation}
    \label{eq:Adjoints}
    X^*=-X, \quad H^*=-H, \quad V^*=-V,
\end{equation}
where $Y^*$ denotes the $L^2(SM,dm)$-adjoint of a differential operator $Y$.

\subsection{The Anosov property and (un)-stable manifolds}
\label{secAnosov}
The main hypothesis in this paper is that the curvature of $g$ is negative, that is $K_g<0$. This ensures that the dynamics of the geodesic flow are \emph{chaotic}.

\begin{proposition}[\cite{Anosov}]
\label{PropAnosov}
The geodesic flow on a negatively curved manifold $(M,g)$ is Anosov $($uniformly hyperbolic$)$. That is, there exist constants $C,\lambda>0$, together with a flow-invariant and continuous splitting 
\begin{equation}
\label{eq:split}T (SM)=E^s\oplus \mathbb R X\oplus E^u,
\end{equation}such that
\begin{equation}
\label{eq:Anosov}
\forall v\in SM,\quad  \begin{cases}\|d\varphi_t (v)W^s\|_{g_{\mathrm{Sas}}}&\leq Ce^{-\lambda t}\|W^s\|_{g_{\mathrm{Sas}}}, \ W^s\in E^s(v), \ t\geq 0,
\\\|d\varphi_t (v)W^u\|_{g_{\mathrm{Sas}}}&\leq Ce^{-\lambda |t|}\|W^u\|_{g_{\mathrm{Sas}}}, \ W^u\in E^u(v), \ t\leq 0.
\end{cases}
\end{equation} 
The bundle $E^s$ $($resp. $E^u)$ is called the stable $($resp. unstable$)$ bundle of the flow.
\end{proposition}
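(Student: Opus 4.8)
The plan is to establish uniform hyperbolicity in the classical way, through Jacobi fields, using that on a surface the relevant equation is the scalar Riccati equation. First I would set up the dictionary between $T_v(SM)$ and Jacobi fields: to $W\in T_v(SM)$ one associates the Jacobi field $J_W$ along $\gamma_v$ with $J_W(0)=dP(W)$ and $\frac{D}{dt}J_W(0)=\mathcal{K}(W)$, so that $d\varphi_t(v)W$ corresponds to $\bigl(J_W(t),\frac{D}{dt}J_W(t)\bigr)$ and flow-invariance of a subbundle becomes a statement about a family of Jacobi fields. The flow direction $\mathbb{R}X$ corresponds to the Jacobi field $\dot\gamma_v$, while its $g_{\mathrm{Sas}}$-orthogonal complement $\mathrm{span}(V(v),H(v))$ corresponds to the $2$-dimensional space of Jacobi fields perpendicular to $\dot\gamma_v$; writing such a field as $J(t)=y(t)e(t)$ with $e(t)$ a parallel unit normal field along $\gamma_v$, the Jacobi equation becomes $y''+K_g(\gamma_v(t))\,y=0$.

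Next I would construct the stable and unstable line fields by the ``limit of finite-time'' procedure. By compactness of $M$, fix $0<a\le b$ with $-b^2\le K_g\le -a^2$. For $T>0$ let $y_T$ solve the scalar Jacobi equation along $\gamma_v$ with $y_T(0)=1$, $y_T(T)=0$; then $u_T:=y_T'/y_T$ solves the Riccati equation $u'+u^2+K_g(\gamma_v(t))=0$ on $[0,T)$, and negative curvature forces $u_T(0)$ to be increasing in $T$ and bounded above, so $u_T(0)\to u^s(v)$ as $T\to\infty$. Comparison with the constant-curvature models gives $u^s(v)\le -a<0$, and the associated solution $y^s$ of the Jacobi equation --- the \emph{stable Jacobi field} --- satisfies $|y^s(t)|\le |y^s(0)|e^{-at}$ for $t\ge 0$; the time-reversed construction produces the \emph{unstable Jacobi field} $y^u$ with $u^u:=(y^u)'/y^u\ge a>0$. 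I would then define $E^s(v)$ and $E^u(v)$ to be the lines in $\mathrm{span}(V(v),H(v))\subset T_v(SM)$ corresponding to $y^s$ and $y^u$.

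It then remains to verify the three required properties, none of which should be hard once the construction is in place. Flow-invariance is automatic, since the stable Jacobi field along $\gamma_v$ restricted to $[t,\infty)$ is the stable Jacobi field along $\gamma_{\varphi_t v}$. The splitting $T(SM)=E^s\oplus\mathbb{R}X\oplus E^u$ holds because the Wronskian $y^s(y^u)'-(y^s)'y^u=y^sy^u\,(u^u-u^s)$ never vanishes, so $E^s\cap E^u=0$ inside the perpendicular $2$-plane. Continuity of $v\mapsto E^{s/u}(v)$ follows from continuous dependence of ODE solutions on $v$ together with the fact --- again from the comparison estimates --- that $u_T\to u^s$ locally uniformly in $v$. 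Finally, for $W^s\in E^s(v)$ one computes
\[
\|d\varphi_t(v)W^s\|_{g_{\mathrm{Sas}}}^2=\bigl(1+u^s(\varphi_t v)^2\bigr)\,|y^s(t)|^2\le (1+b^2)\,e^{-2at}\,|y^s(0)|^2\le (1+b^2)\,e^{-2at}\,\|W^s\|_{g_{\mathrm{Sas}}}^2,
\]
giving the estimate with $C=\sqrt{1+b^2}$ and $\lambda=a$; the unstable estimate is symmetric under $t\mapsto -t$.

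The main obstacle is the middle step: showing that the finite-time Riccati solutions $u_T$ converge as $T\to\infty$ and that the limit $u^s(v)$ depends continuously (indeed Hölder-continuously) on $v$, with the rate constants uniform over $SM$. This is exactly where negative curvature and the compactness of $M$ enter, via Sturm/Riccati comparison with constant-curvature models; the Jacobi-field dictionary and flow-invariance are formal, and the Wronskian and Sasaki-norm computations are routine. (This is Anosov's theorem \cite{Anosov}.)
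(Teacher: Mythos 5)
Your argument is correct in outline and is essentially the classical Eberlein-style proof for surfaces: build the stable line as the limit of Jacobi fields vanishing at time $T$, control the logarithmic derivative by Riccati/Sturm comparison, and read off the contraction from the Sasaki norm. Note, however, that the paper does not prove this proposition at all: it cites Anosov and Ballmann and only \emph{recalls the geometric description} of the bundles --- the stable leaf is the inward unit normal field of a horocycle, $e^s=H+w^sV$ as in \eqref{eq:e^s}, $w^s=Xj^s/j^s$ satisfies the Riccati equation \eqref{eq:Riccati}, and the contraction is read off from \eqref{eq:contract} together with the fact that $w^s$ is continuous and negative, hence bounded away from $0$ on the compact $SM$. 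Your finite-time limit construction is precisely the work the paper outsources: it produces the stable solution $j^s$ and the two-sided bound $-\sqrt{K_2}\le w^s\le -\sqrt{K_1}$ that the paper later quotes from Knieper--Weiss in Appendix A. Two small caveats. First, your Sasaki-norm estimate uses $|u^s(\varphi_t v)|\le b$, which requires the \emph{lower} Riccati bound $u^s\ge -b$ coming from the curvature bound $K\ge -b^2$, not only the upper bound $u^s\le -a$ that you derive; this is part of the same comparison argument but should be stated. Second, the proposition as written concerns negatively curved manifolds of arbitrary dimension, where the scalar Jacobi equation must be replaced by the matrix Riccati equation for the second fundamental forms of horospheres; since the paper only ever uses the surface case, this is a matter of matching the stated generality rather than a flaw in your argument.
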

See, for instance, \cite{Bal} for a proof of this proposition.

\subsubsection{Stable manifolds and horocycles}
\label{buse}We start by describing the stable and unstable \emph{manifolds} of the flow.
For any $v \in  SM$, these are, by definition, immersed submanifolds 
\begin{equation}
\label{eq:W^{s,u}}
\begin{split}
\mathcal W^{s}(v)&:=\{v'\in S M\mid \lim\limits_{t\to +\infty}d(\varphi_t(v),\varphi_t(v'))=0\}, 
\\
\mathcal W^{u}(v)&:=\{v'\in S M\mid \lim\limits_{t\to -\infty}d(\varphi_t(v),\varphi_t(v'))=0\},
\end{split}
\end{equation}
 called the (strong) stable (resp. unstable) manifolds, such that 
$ T_v \mathcal W^{s}=E^{s}(v)$ and $ T_v \mathcal W^{u}=E^{u}(v).$  
We also define the \emph{weak} stable and unstable manifolds
\begin{equation}
\label{eq:W^{ws,wu}}
\begin{split}
\mathcal W^{cs}(v)&:=\{v'\in S M\mid \limsup_{t\to+\infty} \ d(\varphi_{t}(v),\varphi_{t}(v'))<+\infty\}=\bigcup_{t\in \mathbb R}\varphi_t(\mathcal W^{s}(v)),
\\
\mathcal W^{cu}(v)&:=\{v'\in S M\mid \limsup_{t\to -\infty} \ d(\varphi_{t}(v),\varphi_{t}(v'))<+\infty\}=\bigcup_{t\in \mathbb R}\varphi_t(\mathcal W^{u}(v)). 
\end{split}
\end{equation}
Their tangent spaces are given respectively by $\mathbb R X\oplus E^s$ and $\mathbb R X\oplus E^u$.

Geometrically, we can describe the strong/weak stable/unstable manifolds  in terms of \emph{Busemann functions}. 
To lighten the presentation, we will describe the stable case only. 
Let $\tilde M$ denote the universal cover of $M$ and let $\partial \tilde M$ denote its visual boundary at infinity; see for instance, \cite[Chapter 8]{BH}, \cite[Chapter II]{Bal}. 
Let $\pi: S \tilde M \to \partial \tilde M$ denote the natural forward projection along the geodesic flow. 
We have the identification
\begin{equation}\label{eq:Pi}
    \Pi: S \tilde M \to \tilde M \times \partial \tilde M, \quad (x, v) \mapsto (x, \pi(x,v)).
\end{equation}
For $(x, \xi) \in SM$, let $b_{x, \xi} \in C^\infty(\tilde{M})$ denote the associated \emph{Busemann function}: 
\begin{equation}\label{eq:Busemann}
    b_{x,\xi}(p) = \lim_{t \to \infty} (d(p, \gamma_{v}(t)) - t),
\end{equation}
where $\gamma_{v}$ is the geodesic such that $\gamma_{v}(0)=x$ and $\pi(x,v) = \xi$
(see, for instance, \cite[Chapter II]{Bal}). 
For any fixed $\xi \in \partial \tilde M$, the dependence of $b_{x, \xi}(p)$ on $p$ and also on $x$ is $C^\infty$ (see e.g. \cite[Proposition 2.2]{wilkinsonmls}), whereas the dependence on $\xi$ is in general only H{\"o}lder continuous, even though $g$ is a smooth metric. 
Nevertheless, when $\dim M = 2$, it follows from the work of Hurder--Katok \cite{HuKa} that the dependence in $\xi$ is $C^{1+\alpha}$ for some $\alpha>0.$ 
Level sets of Busemann functions are called \emph{horospheres}, or \emph{horocycles} in the case where $\dim M = 2$. 

Fix $\xi \in \partial \tilde M$ and define the vector field $X^{\xi}(y) = -{\rm grad}\,  b_{x,\xi}(y)$ for $y \in \tilde M$. 
Then for $v = (x, \xi) \in SM$, the lift of $\mathcal{W}^{s}(v)$ to $S \tilde M$ is given by (see \cite[p. 72]{Bal}) 
\begin{equation}\label{eq:liftofWs}
   \widetilde{\mathcal{W}^s(v)} =  \{ X^{\xi} (y) \, | \, y \in \{ b_{x, \xi} = 0 \} \}.
\end{equation}
Similarly, the lift of $\mathcal{W}^{cs}(v)$ to $S \tilde M$ is given by
\begin{equation}\label{eq:liftofWcs}
   \widetilde{\mathcal{W}^{cs}(v)} =  \{ X^{\xi} (y) \, | \, y \in \tilde M \}.
\end{equation}

A Jacobi field associated to the geodesic variation in \eqref{eq:liftofWs} is called a \emph{stable Jacobi field}. Since such a Jacobi field is everywhere perpendicular to the geodesic $\gamma_v$ determined by $v\in SM$, and $\dim M = 2$, we can view it as a real-valued function along the geodesic $\gamma_v(t)$. Letting $j^s(t)$ denote this function, we have $j^s(t) \to 0$ as $t \to \infty$. We will call $j^s$ a \textit{stable Jacobi function}.
The exponential decay estimates in the Anosov property (\ref{eq:Anosov}) are equivalent to analogous decay estimates for $j^s(t)$ and $Xj^s(t)$. 
\subsubsection{The stable vector field}
\label{sec:w^s}
We now specify a vector field $e^{s}$ which spans the stable bundle $E^{s}$. 
Let $s \mapsto c(s)$ be a parametrization of the horocycle $\{ b_{x, \xi} = 0 \}$ such that $c(0) = x$ and $c'(0) = J (x, \xi)$, where $J$ is the complex structure of $M$ discussed in the previous section. 
We define the \emph{stable vector field} $e^s$ on $S\tilde M$ by $\left(\Pi_{*}e^s\right)(x,\xi) = \frac{d}{ds}|_{s=0} (c(s), \xi)$. By construction, $e^s$ has integral curves given by $\mathcal{W}^s$. Moreover, since the horizontal component of $e^s$ is $Jv$, we see that $e^s$ is of the form
\begin{equation}\label{eq:e^s}
   e^s  = H + w^s V  
\end{equation}
for some function $w^s:S \tilde M \to \R$.

By the above discussion, the regularity of $w^s$ is $C^{1+\alpha}$ in the setting $\dim M = 2$ \cite{HuKa}.  
We have the following two characterizations of $w^s$, both of which are used crucially in this paper:
\begin{itemize}
   \item $-w^s(v)$ is the (trace of the) second fundamental form, i.e., the mean curvature, of the horosphere $\{ b_v = 0\}$ (or, since $\dim M = 2$, the geodesic curvature of the horocycle). Since the trace of the second fundamental form of a level hypersurface is given by the Laplacian of its defining function, we obtain
    \begin{equation}\label{eq:2ndff}
        -w^s(\Pi^{-1}(x,\xi)) = \Delta b_{x,\xi}(x) = -{\rm Div} (X^{\xi})(x).
    \end{equation}
    \item $w^s = \frac{X j^s}{j^s}$, where $j^s$ is the stable Jacobi function along $\gamma_v$ defined above. In particular, $w^s$ is everywhere negative. Moreover, since $j^s$ satisfies the Jacobi equation, a direct computation shows that $w^s$ satisfies the \emph{Riccati equation}
    \begin{equation}\label{eq:Riccati}
        X(w^s) = -(w^s)^2 - K.
    \end{equation}
\end{itemize}
%
    Note that since $w^s=X(j^s)/j^s = X(\ln(j^s))$, one has 
    \begin{equation} 
    \label{eq:contract}
   \frac{\|d\varphi_t(v)e^s( v)\|}{\|e^s(v)\|}=\frac{j^s(\varphi_t(v))}{j^s(v)}=\exp\left(\int_0^tX(\ln(j^s))(\varphi_rv)dr\right)=e^{\int_0^tw^s(\varphi_r p)dr}.
    \end{equation}
    Since $\sup\limits_{v\in SM}w^s(v)<0$, this shows that $e^s$ is indeed exponentially contracted along the flow, which is consistent with the fact that the stable foliation $E^s$ is tangent to $\mathcal W^s.$

\subsection{Liouville entropy}
\label{secDiv}
The main object of study of this paper is the measure-theoretic entropy of the geodesic flow with respect to the Liouville measure $m_g$, which we denote by $h_{\mathrm{Liou}}(g)$ from now on.  
This invariant roughly captures the exponential rate of divergence of nearby geodesics for $m_g$-a.e. point; 
 see, for instance, \cite[Proposition 1.6]{Ka} or \cite[Appendix A]{FishHas}. 
We recall that in our setting
\begin{itemize}
    \item the geodesic flow $(\varphi_t)_{t\in \mathbb R}$ is Anosov, see Proposition \ref{PropAnosov};
    \item the Liouville measure $m_g$ is \emph{smooth}, meaning, it is equal to the normalized Riemannian volume on $SM$ induced by the Sasaki metric.
\end{itemize}
We can thus use the theory of \emph{thermodynamic formalism} to write $h_{\mathrm{Liou}}$ in terms of the \emph{stable Jacobian}. 
The stable Jacobian of a general Anosov flow is given by the following formula 
\begin{equation*}
\label{eq:Ju1}
J^s(v):=-\left.\frac{d}{dt}\mathrm{det}(d\varphi_t(v)_{|E^s(v)})\right|_{t=0}=-\left.\frac{d}{dt}\ln\mathrm{det}(d\varphi_t(v)_{|E^s(v)})\right|_{t=0}.
\end{equation*}

It is well known that the thermodynamic equilibirium measure associated to $-J^s$ is the Liouville measure; 
see for instance \cite[Theorem 7.4.14]{FishHas}. 
Using 
(\ref{eq:contract}), we have 
$J^s(v)=-w^s(v),$
which implies, by \cite[Corollary 7.4.5]{FishHas},
that we have
\begin{equation}
\label{eq:Liouvilledef}h_{\mathrm{Liou}}(g)=\int_{SM}J^s_g(v)dm_g(v)=-\int_{SM}w^s_g(v)dm_g(v).
\end{equation}
This formula will be the starting point for our proof of Theorem \ref{thm:conf-deriv}.

\begin{remark}
  Alternatively, one can deduce (\ref{eq:hDivX}) using \emph{Lyapunov exponents}, via \emph{Pesin's entropy formula} \cite{Pesin_entropy}. See, for instance, \cite[p. 354]{Man2} and \cite[Appendix A]{KniWei} for accounts of this approach. 
\end{remark}

\begin{remark}\label{rem:MRC}
The mean root curvature (see \eqref{eq:mrc_def}) is conceptually related to the Liouville entropy as follows: averaging both sides of the Riccati equation \eqref{eq:Riccati} with respect to Liouville measure shows that the average of $(w^s)^2$ coincides with that of $-K$; thus one might expect the Liouville entropy, which is the average of $-w^s$, to be related to the average of $\sqrt{-K}$. Indeed, as mentioned in the introduction, Manning proved the former is always larger than the latter \cite[Theorem 2]{Man2}.
\end{remark}


\subsection{A Jensen-type inequality}

\label{secInt}
To show positivity of the derivatives of both the Liouville entropy and mean root curvature, we will need the following lemma. 

\begin{lemma}\label{lemma: integral inequality}
Let $(\Omega,\mu)$ be a probability space. Let $F\colon \Omega\rightarrow \mathbb R$ be a measurable and non-negative function. Then,
$$\int_\Omega F^2\left(F-\int_\Omega F\,d\mu\right)\,d\mu\geq 0,$$
with equality if and only if $F$ is $\mu$-a.e constant.
\end{lemma}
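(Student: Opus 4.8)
The plan is to prove this by a direct application of Jensen's inequality (or equivalently, a covariance-type rearrangement), after separating out the edge case where $F$ vanishes identically.

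\medskip

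First I would dispense with the trivial case $\int_\Omega F\,d\mu = 0$: since $F\geq 0$, this forces $F=0$ $\mu$-a.e., the integrand is zero a.e., and both the inequality and the equality characterization hold. So from now on assume $a:=\int_\Omega F\,d\mu > 0$. The key observation is that $F\mapsto F^2$ is an increasing function on $[0,\infty)$, hence $F^2$ and $F$ are \emph{comonotone}: for all $x,y\in\Omega$ one has $\bigl(F(x)^2-F(y)^2\bigr)\bigl(F(x)-F(y)\bigr)\geq 0$. Integrating this inequality over $\Omega\times\Omega$ with respect to $\mu\otimes\mu$ and expanding gives
\begin{equation*}
2\int_\Omega F^3\,d\mu - 2\left(\int_\Omega F^2\,d\mu\right)\left(\int_\Omega F\,d\mu\right) \geq 0,
\end{equation*}
which is exactly the asserted inequality once one notes that the left-hand side equals $2\int_\Omega F^2\bigl(F - \int_\Omega F\,d\mu\bigr)\,d\mu$ after expanding the product $F^2(F-a)$ and using $\int_\Omega F^2(F-a)\,d\mu = \int_\Omega F^3\,d\mu - a\int_\Omega F^2\,d\mu$. (Alternatively, one can phrase this as $\mathrm{Cov}_\mu(F^2,F)\geq 0$, the Chebyshev sum / FKG inequality for a single increasing pair of functions.)

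\medskip

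For the equality case, the comonotonicity inequality $\bigl(F(x)^2-F(y)^2\bigr)\bigl(F(x)-F(y)\bigr)\geq 0$ is strict whenever $F(x)\neq F(y)$ (both factors have the same sign and are simultaneously nonzero), so the double integral vanishes if and only if $F(x)=F(y)$ for $(\mu\otimes\mu)$-a.e. $(x,y)$, which is equivalent to $F$ being $\mu$-a.e. equal to a constant. One has to be slightly careful to argue that $\int_\Omega F^3\,d\mu$ and $\int_\Omega F^2\,d\mu$ are finite (or at least make sense in $[0,+\infty]$) for the expansion to be legitimate; in the intended application $F$ is continuous on a compact space so integrability is automatic, but in general one can simply truncate $F$ at level $N$, apply the argument to $F\wedge N$, and let $N\to\infty$ by monotone convergence, noting the statement is vacuous (the inequality reads $+\infty\geq$ something) if $F\notin L^2$.

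\medskip

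I do not anticipate a serious obstacle here; the only mild subtlety is bookkeeping the integrability/finiteness issues and making the equality analysis clean. The symmetrization trick ("integrate the pointwise comonotonicity estimate over $\Omega\times\Omega$") handles both the inequality and its equality case in one stroke, so that is the route I would take rather than invoking Jensen's inequality applied to the conditional-expectation formulation, which would require more setup to extract the equality characterization.
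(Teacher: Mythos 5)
Your argument is correct, but it proceeds by a genuinely different route than the paper. The paper's proof is a direct pointwise comparison: writing $c=\int_\Omega F\,d\mu$, it observes that $F^2(x)\bigl(F(x)-c\bigr)\geq c^2\bigl(F(x)-c\bigr)$ holds both on $\{F\leq c\}$ and on $\{F>c\}$, and then integrates, the right-hand side having integral zero. Your proof instead symmetrizes: you integrate the comonotonicity inequality $\bigl(F(x)^2-F(y)^2\bigr)\bigl(F(x)-F(y)\bigr)\geq 0$ over $\Omega\times\Omega$, which is the Chebyshev correlation inequality $\mathrm{Cov}_\mu(F^2,F)\geq 0$; expanding recovers $2\bigl(\int F^3 - \int F^2\int F\bigr)=2\int F^2(F-c)$. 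Both arguments are valid; the paper's is slightly more elementary (no product measure, no expansion of a double integral, and no integrability bookkeeping, since the pointwise bound $F^2(F-c)\geq c^2(F-c)$ makes the one-sided interpretation of the integral transparent), while yours packages the inequality and the equality case into a single manifestly non-negative integrand $\bigl(F(x)+F(y)\bigr)\bigl(F(x)-F(y)\bigr)^2$, from which strictness for non-constant $F$ is immediate. One cosmetic remark: your opening sentence announces "a direct application of Jensen's inequality," but what you actually use is the Chebyshev/FKG rearrangement, not Jensen --- and indeed the paper explicitly notes that $x\mapsto x^2(x-c)$ is not convex on $[0,\infty)$, so Jensen does not apply here; your actual argument sidesteps this correctly. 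Your handling of the degenerate case $\int F\,d\mu=0$ and of integrability via truncation is sound, though the natural integrability threshold is $F\in L^3$ rather than $L^2$ (the negative part of $F^2(F-c)$ is bounded by $c^3$, so the integral is always well defined in $(-\infty,+\infty]$ and equals $+\infty$ when $\int F^3\,d\mu=\infty$).
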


\begin{proof}
We denote $c=\int_\Omega F\,d\mu\geq 0$. Let $\Omega_c=\{x\in \Omega\,|\, F(x)\leq c\}$. Note that if $x\in \Omega_c$ then $F^2(x)\leq c^2$ and $F(x)-c\leq 0$, so $F^2(x)(F(x)-c)\geq c^2(F(x)-c)$. Similarly, if $x\in \Omega\setminus \Omega_c$, then $F^2(x)(F(x)-c)\geq c^2(F(x)-c)$. Thus,
\begin{align*}
\int_\Omega F^2\left(F-\int_\Omega F\,d\mu\right)\,d\mu
&=\int_{\Omega_c}F^2\left(F-c\right)\,d\mu+\int_{\Omega\setminus \Omega_c}F^2\left(F-c\right)\,d\mu\\
&\geq c^2\left(\int_{\Omega_c}\left(F-c\right)\,d\mu+\int_{\Omega\setminus \Omega_c}\left(F-c\right)\,d\mu\right)=c^2\int_{\Omega}(F-c)\,d\mu.
\end{align*}
Since $\mu$ is normalized, we have $\int_\Omega c \, d\mu = c = \int_\Omega F \, d\mu$, which shows the last line above equals 0. The equality holds if and only if $F$ is $\mu$-a.e constant to $c$.
\end{proof}

\section{Monotonicity of the mean root curvature}\label{section:meancurv}

In this section, we prove the mean root curvature $\kappa$ defined in \eqref{eq:mrc_def} is monotonically increasing along the normalized Ricci flow (Theorem \ref{theo:meancurv}). 
First, we compute the variation of the mean root curvature with respect to an area-preserving conformal change. Since we are only interested in the sign of the derivative, we can suppose without loss of generality that $A(g_{\eps})\equiv 1.$ We will use $\epsilon$ as a subscript to indicate that the corresponding objects are defined with respect to the metric $g_\epsilon.$ We consider a smooth one-parameter family of conformal area-preserving changes of $g_0$ :
\begin{equation}\label{eq:conformal}
    g_{\eps} = e^{2 \rho_{\eps}} g_0,\qquad A(g_{\epsilon})=\int_M e^{2\rho_\epsilon(x)}dA_0(x)\equiv A(g_0) .
\end{equation}
We let $\dot \rho_0 \in C^{\infty}(M)$ denote the variation of the conformal factor $\frac{d}{d \eps}|_{\eps = 0} \rho_{\eps}$. 

\begin{proposition}\label{meancurv-conf}
    Let $(M, g_0)$ be a closed surface of 
    negative curvature and area $1$. Let $\eps \mapsto g_{\eps} = e^{2 \rho_{\eps}} g_0$ be a conformal area-preserving deformation of $g_0$.
    Then we have
    \begin{equation}\label{eq:meancurv-conf}
    \dot \kappa_0:=\left.\frac{d} {d\epsilon}\right|_{\epsilon=0}\kappa(g_\epsilon) = \int_M \frac{\Delta_0 \dot \rho_0}{2 \sqrt{-K_0}} dA_0 + \int_M \dot\rho_0 \sqrt{-K_0}\, dA_0.    
    \end{equation}
\end{proposition}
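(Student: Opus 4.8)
The plan is to compute $\dot\kappa_0$ directly from the definition \eqref{eq:mrc_def}, using that $A(g_\eps)\equiv 1$ to drop the normalization factor, so that $\kappa(g_\eps)=\int_M \sqrt{-K_\eps}\,dA_\eps$. I would first recall the two standard formulas for how the curvature and area form transform under a conformal change $g_\eps=e^{2\rho_\eps}g_0$ in dimension two: $dA_\eps = e^{2\rho_\eps}\,dA_0$ and $K_\eps = e^{-2\rho_\eps}\bigl(K_0 - \Delta_0\rho_\eps\bigr)$, where $\Delta_0$ is the (nonnegative or nonpositive — fix the sign convention consistently with \eqref{eq:Riccati}) Laplace--Beltrami operator of $g_0$. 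Hence $\sqrt{-K_\eps}\,dA_\eps = e^{\rho_\eps}\sqrt{-\bigl(K_0-\Delta_0\rho_\eps\bigr)}\;dA_0$, and the whole integrand is expressed against the fixed reference measure $dA_0$.

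Next I would differentiate under the integral sign at $\eps=0$, using $\rho_0\equiv 0$ (since $g_0=e^{0}g_0$) and $\dot\rho_0=\tfrac{d}{d\eps}|_{\eps=0}\rho_\eps$. Differentiating $e^{\rho_\eps}$ gives a factor $\dot\rho_0$ times $\sqrt{-K_0}$, yielding the second term $\int_M \dot\rho_0\sqrt{-K_0}\,dA_0$. Differentiating the square root $\sqrt{-(K_0-\Delta_0\rho_\eps)}$ gives $\tfrac{1}{2\sqrt{-(K_0-\Delta_0\rho_\eps)}}\cdot\Delta_0\dot\rho_0$, which at $\eps=0$ is $\tfrac{\Delta_0\dot\rho_0}{2\sqrt{-K_0}}$, producing the first term. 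Adding these gives exactly \eqref{eq:meancurv-conf}. One should also note the area-preserving constraint $\int_M e^{2\rho_\eps}\,dA_0\equiv 1$ differentiates to $\int_M \dot\rho_0\,dA_0 = 0$; this is not needed to derive the formula but is worth recording since it will be used later when combining with Lemma \ref{lemma: integral inequality}.

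The only genuine technical point — hardly an obstacle — is justifying differentiation under the integral sign: since $M$ is closed, $K_0<0$ is bounded away from $0$, and $\eps\mapsto\rho_\eps$ is smooth in $\eps$ with values in $C^\infty(M)$, the integrand and its $\eps$-derivative are uniformly bounded, so dominated convergence applies. I would also double-check the sign convention for $\Delta_0$ against the conformal curvature formula so that the stated expression matches; with the geometer's Laplacian (negative spectrum) the formula reads $K_\eps = e^{-2\rho_\eps}(K_0 - \Delta_0\rho_\eps)$, consistent with \eqref{eq:meancurv-conf}. Everything else is the routine chain-rule computation sketched above, and no delicate estimate is required.
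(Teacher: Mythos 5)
Your proposal is correct and follows essentially the same route as the paper: both use the conformal transformation laws $K_\eps = e^{-2\rho_\eps}(K_0-\Delta_0\rho_\eps)$ and $dA_\eps = e^{2\rho_\eps}dA_0$ and then differentiate at $\eps=0$ (the paper applies the Leibniz rule to the product $\sqrt{-K_\eps}\,dA_\eps$ term by term, while you first combine everything into the single density $e^{\rho_\eps}\sqrt{-(K_0-\Delta_0\rho_\eps)}$ against $dA_0$ — an immaterial reorganization yielding the same two terms). Your remarks on justifying differentiation under the integral sign and on $\int_M\dot\rho_0\,dA_0=0$ are correct but not needed for the identity itself.
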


\begin{proof} 
Using \eqref{eq:mrc_def} and the Leibniz rule, we have 
\begin{equation}
    \dot \kappa_0 = \int\limits_M \left. \frac{d}{d\eps} \right|_{\eps=0} \sqrt{-K_\eps} \,dA_0+\int\limits_M 
    \sqrt{-K_0}\left.\frac{d}{d\eps}\right|_{\eps=0}\left(dA_\eps \right)
\end{equation}

To simplify the first term, we use the following formula \cite[Lemma 5.3]{chow2004ricci} relating the Gaussian curvature of conformal metrics:
\[
K_\eps = e^{-2\rho_\eps}\left(-\Delta_0\rho_\eps+K_0\right).
\]
Hence, $\dot K_0 = -2\dot\rho_0K_0-\Delta_0\dot\rho_0$, and thus
\begin{equation*}
\frac{d}{d \eps} \Big|_{\eps = 0} \sqrt{-K_{\eps}} = \frac{- \dot K_0}{2 \sqrt{-K_0}}
= -\dot \rho_0 \sqrt{-K_0} + \frac{\Delta_0 \dot{\rho_0}}{2 \sqrt{-K_0}}.
\end{equation*}
For the second term, we note that $dA_{\eps} = e^{2 \rho_\eps} dA_0$. This gives \begin{equation}\label{eq:area}
    \tfrac{d}{d \eps}|_{\eps=0} dA_{\eps} = 2 \dot \rho_0 d A_{0},
\end{equation}
    and combining everything completes the proof.
\end{proof}

Now we specialize to the normalized Ricci flow, i.e., we set $\dot{\rho_{0}} = -(K_{0} - \bar{K})$.
To prove our monotonicity result, we  use Lemma \ref{lemma: integral inequality} to show positivity of the second term in \eqref{eq:meancurv-conf}.

\begin{proof}[Proof of Theorem \ref{theo:meancurv}]
    Letting $\dot \rho_0 =-( K_0 - \bar{K})$ in Proposition \ref{meancurv-conf} and setting $F = \sqrt{-K_0} > 0$ gives
\begin{align*}
    \dot \kappa_0 &= {-}\int_M \frac{\Delta_0 K_0}{2 \sqrt{-K_0}} dA_0 {-} \int_M \sqrt{-K_0} (K_0 - \bar K) dA_0 = \int_M \frac{\Delta_0 F^2}{2F} dA_0 + \int_M (F^3 - F \int_M F^2) \, dA_0.
    \end{align*}
   For the first term, using Stokes' theorem yields
    \[
    \int_M \frac{\Delta_0 F^2}{2F} dA_0 = \int_M  \frac{1}{2}\Delta_0 F \, dA_0 + \int_M \frac{\Vert \nabla_0 F \Vert^2}{F} = \int_M \frac{\Vert \nabla_0 F \Vert^2}{F}\geq 0, 
    \]
    which is positive whenever $F$ (and hence $K_0$) is nonconstant.
    For the second term, we use 
\[
\int_M F \left( \int_M F^2 dA_0 \right) \, dA_0 = \left( \int_M F dA_0 \right) \left( \int_M F^2 dA_0 \right) = \int_M F^2 \left( \int_M F dA_0 \right) dA_0,
\]
to obtain
\[
 \int_M \left( F^3 - F \int_M F^2 dA_0\right) \, dA_0 = \int_M F^2 \left( F -  \int_M F dA_0 \right) dA_0.
\]
 By Lemma \ref{lemma: integral inequality}, this term is positive for $F$ non-constant. Hence, we showed that $\dot \kappa_0 > 0$.
\end{proof}

\section{Monotonicity of the Liouville entropy}\label{sec:Liou}

In Section \ref{sub:conf}, we will show Theorem \ref{thm:conf-deriv}, which expresses the derivative of the Liouville entropy with respect to an arbitrary conformal perturbation in terms of $w^s$ (see Section \ref{sec:w^s}) and the \emph{half-orbit integrals} defined in \eqref{eq:defIf}. After proving some properties of these half-orbit integrals in Section \ref{sec:half}, we then specialize to the case of the normalized Ricci flow in Section \ref{sub:specRicci} to deduce Theorem \ref{theo:Liou} when the initial metric is $1/6$-pinched. 

\subsection{Differentiating the Liouville entropy under conformal change}\label{sub:conf}

We consider a smooth $1$-parameter family of conformal metrics as in \eqref{eq:conformal}.
We  differentiate the Liouville entropy with respect to this general conformal deformation starting from \eqref{eq:Liouvilledef}.  

As in the work of Ledrappier--Shu \cite{LedShu_conformal}, our computation relies on the identification of $S \tilde M$ with $\tilde M \times \partial \tilde M$ introduced in \eqref{eq:Pi}.
Recall that $\pi_g\colon S^g\tilde{M}\to\partial\tilde M$ denotes the forward projection along the geodesic flow to the boundary at infinity.
Since the metrics $g_{\eps}$ are all quasi-isometric to $g_0$ (via the identity map), 
and $\partial \tilde M$ is a quasi-isometry invariant (see, for instance, \cite[Theorem III.H.3.9]{BH}), we can identify all the unit tangent bundles $S^g \tilde M$
using the identification 
$$\Pi_g: S^g\tilde M \to \tilde M \times \partial \tilde M, \quad (x, v) \mapsto (x, \pi_g(x,v)).$$
Let $M_0 \subset \tilde M$ be a fundamental domain for the action of the fundamental group of $M$ on $\tilde M$.
From now on, we will identify $S^gM$ with  $\Pi_g^{-1}(M_0 \times \partial \tilde M)$. 
We will write $\Pi_{g,x}$ for the restriction of $\Pi_g$ to the fiber $S_x^g\tilde M$. For $\xi \in \partial \tilde M$, recall from Section \ref{buse} that
$X^\xi_g\in C^{\infty}(\tilde M,T\tilde M)$
is a vector field such that $\pi_g(x,X^\xi_g(x))=\xi.$ 
Using \eqref{eq:Liouvilledef} and \eqref{eq:2ndff}, we see that the {Liouville entropy} is given by
\begin{equation}\label{eq:hDivX}
  h_{\mathrm{Liou}}(g) = - \int_{\tilde M \times \partial \tilde M} {\rm Div}_g(X^{\xi}_g)(x) \, (\Pi_g)_*dm_g(x, \xi).  
\end{equation}

To prove Theorem \ref{thm:conf-deriv}, we set $g = g_{\eps}$ for $g_{\eps}$ as in \eqref{eq:conformal} in the above formula and differentiate with respect to $\eps$. We use the subscript $\eps$ to denote objects defined from the metric $g_\eps$.

The fact that the Liouville entropy depends differentiably on the metric is non-trivial; this is due to Knieper and Weiss \cite{KniWei}  
for negatively curved surfaces, and to Contreras \cite{Con} for general negatively curved manifolds; see also \cite[(B1)]{Fla} for a more explicit formula. 
We will use a slightly different approach from \cite{KniWei} to compute the derivative by starting from \eqref{eq:hDivX} (the difference being that we integrate  ${\rm Div}(X)$ instead of the Riccati solution $w^s$). Formally differentiating \eqref{eq:hDivX} yields
\begin{equation}
\label{eq:D^1}
\begin{split}
    \left. \frac{d}{d \eps}\right|_{\eps = 0}& h_{\mathrm{Liou}}(\eps) = 
    \left.\frac{d}{d \eps}\right|_{\eps = 0} \left(
     -\int_{M_0 \times \partial \tilde M} {\rm Div}_{\eps}(X^{\xi})(x) \, (\Pi_{0})_*dm_0(x, \xi)\right.
     \\&\left.-\int_{M_0 \times \partial \tilde M} {\rm Div}(X_{\eps}^{\xi})(x) \, (\Pi_{0})_*dm_0(x, \xi)
    -\int_{M_0 \times \partial \tilde M} 
    {\rm Div}(X^{\xi})(x) \, (\Pi_{\eps})_*dm_\eps(x, \xi)
    \right).
    \end{split}
\end{equation}
We justify that the above formula makes sense by treating each term individually:
\begin{itemize}
    \item  The variation of the divergence can be computed using \cite{Bes}, see Lemma \ref{lem:divvar}.
 \item We note that the geodesic spray $X_{\eps}^\xi$ is differentiable when the metric varies, as shown by Ledrappier--Shu \cite[Theorem 3.11]{LedShu_general} (building on the work of Fathi--Flaminio \cite{FaFl}, and in turn on \cite[Theorem A.1]{dLMM}).
To simplify the second term in \eqref{eq:D^1}, we crucially use a formula of Ledrappier--Shu for the derivative $\partial_{\eps}|_{\eps=0} X_{\eps}^\xi$ (Proposition \ref{propDaniel}). 
 \item  The variation of the pushforward Liouville measure $(\Pi_{{\eps}})_* dm_{\eps}$ is computed in Lemma~\ref{lemm:VarLiou}. 
 \end{itemize}
 
\subsubsection{Computing the variation of the divergence}
To prove Theorem \ref{thm:conf-deriv}, we start by showing that the first term in (\ref{eq:D^1}) vanishes.
\begin{lemma}\label{lem:divvar}
With the notation introduced above, we have:
    \[  -\int_{M_0 \times \partial \tilde M} \left.\frac{d}{d\eps}\right|_{\eps=0}{\rm Div}_{\eps}(X^{\xi})(x) \, (\Pi_{0})_*dm_0(x, \xi) = 0.\]
\end{lemma}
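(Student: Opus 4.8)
The plan is to show that the divergence term $\mathrm{Div}_\eps(X^\xi)$ actually has \emph{no explicit $\eps$-dependence}, so that its $\eps$-derivative vanishes pointwise, and then to justify passing the derivative under the integral. The point is that $X^\xi(y) = -\mathrm{grad}_{g_0}\, b_{x,\xi}(y)$ is a fixed $C^1$ vector field on $\tilde M$ (built from the $g_0$-Busemann functions), and here the divergence $\mathrm{Div}_\eps$ is the $g_\eps$-divergence of this \emph{same} vector field. First I would recall the standard formula for how the divergence of a vector field transforms under a conformal change of metric in dimension $2$: if $g_\eps = e^{2\rho_\eps} g_0$ then for any vector field $Z$ one has $\mathrm{Div}_\eps(Z) = e^{-2\rho_\eps}\,\mathrm{Div}_0\big(e^{2\rho_\eps} Z\big) = \mathrm{Div}_0(Z) + 2\, Z(\rho_\eps)$ in dimension $2$ (the area form scales by $e^{2\rho_\eps}$, and the general formula $\mathrm{Div}_g(Z) = \tfrac{1}{\sqrt{|g|}}\partial_i(\sqrt{|g|}\,Z^i)$ gives this). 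Hence
\[
\left.\frac{d}{d\eps}\right|_{\eps=0} \mathrm{Div}_\eps(X^\xi)(x) = 2\, X^\xi(\dot\rho_0)(x),
\]
a $C^0$ function of $(x,\xi)$ (it is $C^0$ because $X^\xi$ is $C^1$ in $x$ and, by Hurder--Katok, $C^{1+\alpha}$ jointly, while $\dot\rho_0$ is smooth). This already shows the pointwise derivative exists; the differentiation under the integral sign is justified by uniform bounds on a fundamental domain $M_0$, which is compact, and smoothness of $\eps \mapsto \rho_\eps$.

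It then remains to show $\int_{M_0\times\partial\tilde M} X^\xi(\dot\rho_0)(x)\, dm(x,\xi) = 0$. This is where the identification $SM \cong M_0\times\partial\tilde M$ of Section~\ref{secIden} does the work: under $\Pi$, for fixed $\xi$ the slice $\tilde M\times\{\xi\}$ is a weak stable leaf, and $X^\xi$ is precisely the horizontal lift along which the geodesic spray $X$ acts, i.e.\ $X^\xi(\dot\rho_0)(x)$ corresponds to $X(\dot\rho_0\circ P)$ evaluated on $SM$, where $P$ is the footpoint projection. More precisely, pulling back to $SM$, the function $(x,\xi)\mapsto X^\xi(\dot\rho_0)(x)$ equals $X\big(\dot\rho_0\circ P\big)$ as a function on $SM$ (since $X$ at $(x,v)$ differentiates in the direction $v = X^{\pi(v)}(x)$, and $\dot\rho_0\circ P$ only depends on the base point). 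Since $\dot\rho_0\circ P \in C^\infty(SM)$ and $X^* = -X$ with respect to the Liouville measure (see \eqref{eq:Adjoints}), we get
\[
\int_{SM} X\big(\dot\rho_0\circ P\big)\, dm = -\int_{SM} \big(\dot\rho_0\circ P\big)\, X(1)\, dm = 0,
\]
or equivalently directly from flow-invariance of $m$: $\int_{SM} X f\, dm = \frac{d}{dt}\big|_{t=0}\int_{SM} f\circ\varphi_t\, dm = 0$ for any $f\in C^1(SM)$.

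The main obstacle I anticipate is not the algebra but two points of rigor: first, being careful about what "$\mathrm{Div}_\eps(X^\xi)$" means when both the metric and (implicitly) the boundary identification are varying — one must fix the identification $S^{g_\eps}M \cong M_0\times\partial\tilde M$ via quasi-isometry as in the excerpt so that $X^\xi$ genuinely is $\eps$-independent and only the divergence operator carries the $\eps$; second, justifying the interchange of $\tfrac{d}{d\eps}$ and $\int$, which needs a dominated-convergence argument using that $\eps\mapsto\rho_\eps$ is smooth, $M_0$ is relatively compact, and the relevant $x$-derivatives of $\rho_\eps$ are uniformly controlled for small $\eps$, together with the fact that $m$ is a finite measure. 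Once these are in place, the vanishing is immediate from $X^*=-X$ (equivalently, flow-invariance of the Liouville measure).
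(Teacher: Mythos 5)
Your proposal is correct and follows essentially the same route as the paper: both reduce to the pointwise identity $\left.\tfrac{d}{d\eps}\right|_{\eps=0}\mathrm{Div}_\eps(X^\xi)=2X^\xi(\dot\rho_0)$ and then conclude by recognizing this as $2X(\dot\rho_0\circ P)$, a coboundary on $SM$, which integrates to zero by flow-invariance of the Liouville measure (equivalently $X^*=-X$ as in \eqref{eq:Adjoints}). The only difference is in one intermediate step: you derive the pointwise identity from the local coordinate formula for the divergence under a conformal rescaling, whereas the paper obtains it from Besse's first-variation formula for the Levi-Civita connection specialized to $\dot g_0 = 2\dot\rho_0\, g_0$; both computations are valid and yield the same result.
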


\begin{proof}
    To compute the variation of $\mathrm{Div}_{\eps}$ with respect to $\eps$, we will use \cite[Theorem 1.174]{Bes}, which computes the variation of the Levi-Civita connection $\nabla^\eps$ associated to $g_\eps$. More precisely, for any vector fields $X,Y,Z\in C^{\infty}(M;TM)$, we have
\begin{equation}
\label{eq:Besse}g_0(\partial_{\eps}|_{\eps=0}\nabla^{\eps}X(Y),Z)=\frac 12 \big( \nabla_X \dot{g}_0(Y,Z)+\nabla_Y \dot{g}_0(X,Z)-\nabla_Z \dot{g}_0(X,Y)\big).  
\end{equation}
In particular, choosing a local orthonormal frame $(e_i)_{i=1,2}$, we obtain
\begin{align*} \partial_{\eps}|_{\eps=0}\mathrm{Div}_{\eps}(X^{\xi})=&\mathrm{tr}(\partial_{\eps}|_{\eps=0}\nabla^{\eps}X^{\xi})=\sum_{i=1}^2g_0(\partial_{\eps}|_{\eps=0}\nabla^{\eps}X^{\xi}(e_i),e_i) 
\\&=\frac 12 \sum_{i=1}^2\big( \nabla_{X^{\xi}} \dot{g}_0(e_i,e_i)+\nabla_{e_i} \dot{g}_0(X^{\xi},e_i)-\nabla_{e_i}\dot{g}_0(X^{\xi},e_i)\big)
\\&=\frac 1 2\mathrm{tr}(\nabla_{X^\xi} \dot{g_0})=\mathrm{tr}(X^{\xi}(\dot \rho_0)g_0+\dot \rho_0\nabla_{X^{\xi}}{g_0})=2X^{\xi}(\dot \rho_0).
\end{align*}
In the last line, we used $\dot g_0 = 2 \dot \rho_0g_0$ (see \eqref{eq:conformal}), followed by the Leibniz rule, followed by the fact that $\nabla_{X^{\xi}}{g_0}=0$.
In particular, we see that $\partial_{\eps}|_{\eps=0}\mathrm{Div}_{\eps}(X^{\xi})$ is a co-boundary. Indeed, for $X$ as in \eqref{eq:X}, one has $X^{\xi}(\dot \rho_0)(x)=X(\dot \rho_0)(\Pi_{0}^{-1}(x,\xi))$. Since the Liouville measure is invariant with respect to the geodesic flow by \eqref{eq:Adjoints}, this shows that the integral in the statement of the lemma vanishes. \end{proof}

\subsubsection{Computing the variation of the geodesic spray.}

To state the next proposition, we introduce the following \emph{half-orbit integrals.}
For any $f \in C^{1+\alpha}(SM)$, we define a new function $I_f$ on $SM$
by the following integral:
\begin{equation}\label{eq:defIf}
    I_f(v) = \int_0^{+\infty} \frac{j^s(\varphi_t v)}{j^s(v)} e^s(f)(\varphi_t v) \, dt,
\end{equation}
where $j^s(\varphi_t v)$ is a stable Jacobi function along the geodesic determined by $v$ (see Section \ref{buse}), and  $e^s$ is the stable vector field defined in \eqref{eq:e^s}.
Although $j^s$ is only defined up to a scalar factor, the ratio ${j^s(\varphi_t(v))}/{j^s(v)}$ is well defined along the geodesic and decreases exponentially fast when $t\to \infty$, see \eqref{eq:contract}. 
Since $e^s(f)$ is a continuous, and thus bounded, function on $SM$, the integral $I_f$ is convergent and defines a continuous function on $SM.$

Further properties of the above half-orbit integrals, which we will use to prove Theorem~\ref{theo:Liou}, are discussed in Section \ref{sec:half}.

\begin{proposition}\label{prop:second-term}
With the notations introduced above, we have
    \[\left.\frac{d}{d \eps}\right|_{\eps = 0} - \int_{M_0 \times \partial \tilde M} {\rm Div}(X_{\eps}^{\xi}) \, (\Pi_{0})_* dm_0(x, \xi) = -\int_{SM} I_{\dot \rho_0} V(w^s) dm_0 + \int_{SM} \dot \rho_0 w^s \, dm_0.
    \]
\end{proposition}

To prove this, we first use the work of  Ledrappier and Shu to compute the variation of $X_{\eps}^{\xi}$. 
We note that by construction, for each $x$, we have that $\eps \mapsto X_{\eps}^{\xi}(x)$ is a curve in $T_x \tilde M$. Below, we will view its tangent vector $\dot X^{\xi}(x)$ as an element of $T_x \tilde M \cong T_{(x,\xi)}(T_x \tilde M)$.

\begin{proposition}[\cite{LedShu_conformal}]\label{propDaniel}
Fix $v \in S M$ and let $(x, \xi):=\Pi_{0}(v)\in M_0 \times \partial \tilde M$. 
Then $\eps \mapsto X_\eps^\xi$ is differentiable at $\eps=0$ with derivative given by
\begin{equation}
    \label{eq:LDS}
  \left. \frac{d}{d \eps}\right|_{\eps=0} X_{\eps}^{\xi}=-\dot \rho_0 X^{\xi}+Y^{\xi},  
\end{equation}
where $Y^{\xi}$ is a vector field on $\tilde M$ perpendicular to $X^{\xi}$ and given by 
\begin{equation}
\label{eq:Y1}
Y^{\xi}(x):=-I_{\dot \rho_0}(v)JX^\xi(x),
\end{equation}
where $v=X^{\xi}(x)$, $J$ is the complex structure, and $I_{\dot \rho_0}$ is defined in
\eqref{eq:defIf}.\end{proposition}

\begin{proof} 
 For completeness, we give the proof of \cite[Proposition 4.5]{LedShu_conformal} specialized to the case of surfaces. We note that $\dot{X}^{\xi}$ can be naturally split into two terms as follows:
\begin{equation}\label{dotX}
\dot X^{\xi}_0 = \lim_{\eps \to 0} \frac{1}{\eps} \left( X^{\xi}_{\eps}(x) - \frac{X^{\xi}_{\eps}(x)}{\Vert X^{\xi}_{\eps}(x) \Vert}_{g_0} \right) + \lim_{\eps \to 0} \frac{1}{\eps} \left( \frac{X^{\xi}_{\eps}(x)}{\Vert X^{\xi}_{\eps}(x) \Vert}_{g_0} - X^{\xi}_{0}(x) \right).
\end{equation}
The first term records the variation of the $g_0$-length of $X_{\eps}^{\xi}$ and is equal to $\frac{d}{d \eps}|_{\eps=0} \Vert X_{\eps}^{\xi} \Vert_{g_0} X_{0}^{\xi}$.  
Differentiating $\|X^{\xi}_\eps\|_{g_\eps}\equiv 1$  using \eqref{eq:conformal} yields
$$\frac{d}{d \eps}\Big|_{\eps=0} \Vert  X^{\xi}_\eps \Vert_{g_0}=-\frac{d}{d \eps}\Big|_{\eps=0} \Vert  X^{\xi} \Vert_{g_\eps}=-\frac 1 2\dot{g_0}(v,v)=-\dot \rho_0.$$

The second term records the change of direction of $X_{\eps}^{\xi}$. Since it is the tangent vector to a curve in $S_x \tilde M$, it is of the form $f(x, \xi) JX^{\xi}(x)$ for some $f$.   
To compute $f$, consider $(t, \eps) \mapsto \gamma_{\eps}(t)$ where for each fixed $\eps$, the curve $t \mapsto \gamma_{\eps}(t)$ is the $g_{\eps}$-unit speed geodesic such that $\dot{\gamma}_\eps(0) =\Pi_\eps^{-1}(x, \xi)$. 
Let $Q(t) = \partial_{\eps}|_{\eps = 0} \gamma_{\eps}(t)\in S_{\gamma_0(t)}M$
and $q(t) = g_0(Q(t), JX^{\xi}(\gamma_0(t)))$. Since $\frac{D}{dt}Q(0) = \frac{d}{d\eps}|_{\eps = 0} X_{\eps}^{\xi}(x)$,
we have $$f(x, \xi) = g_0\left(\frac{D}{dt} Q(0),  JX^{\xi}\right) = q'(0),$$ where we used that $J$ is parallel and that $X^\xi$ is tangent to a geodesic.  To compute $q'(0)$, we first note that since $\gamma_{\eps}$ is a $g_{\eps}$-geodesic, we have $\nabla^{\eps}_{\dot \gamma_{\eps}} \dot \gamma_{\eps} = 0$. Differentiating at $\eps = 0$ yields the Jacobi-type equation
\begin{equation}\label{eq:forced-jac}
  \frac{D^2}{dt^2}Q(t) + K_0(\gamma_0(t))Q(t) = - \partial_{\eps }|_{\eps=0} \nabla^{\eps}_{\dot \gamma_0} \dot \gamma_0.  
\end{equation}
(Compare with \cite[(4.1)]{LedShu_conformal}.)
To simplify the right-hand side, using \eqref{eq:Besse} and $\nabla_{X^\xi}g_0=0,$
\begin{align*}g_0(- \partial_{\eps }|_{\eps=0} \nabla^{\eps}_{\dot \gamma_0} \dot \gamma_0,Z)= &-\frac 12 (2\nabla_{X^\xi}\dot{g_0}(X^\xi,Z)-\nabla_Z\dot{g}_0(X^\xi,X^\xi))
\\&=-2\nabla_{X^\xi}(\dot{\rho}_0g_0)(X^\xi,Z)+Z(\dot{\rho}_0)g_0(\dot \gamma_0,\dot \gamma_0)
\\&=-2X(\dot{\rho}_0)g_0(\dot \gamma_0,Z)+Z(\dot{\rho}_0),
\end{align*}
for any vector field $Z$.  Now we take the inner product of \eqref{eq:forced-jac} with $J X^{\xi}$ and use again that $J$ is parallel and $X^\xi$ is tangent to a geodesic to obtain
\begin{equation}
  \label{eq:Wronskien}  
q''(t) + K_0(\gamma_0(t))q(t) = JX^{\xi}(\dot \rho_0) = e^s(\dot \rho_0). 
\end{equation}
For the last equality, we note that 
    for a function $F$ on $M_0\times \partial \tilde M$ such that $F\circ \Pi_0\in C^1(S^0M)$, we have 
    \begin{equation}\label{eq:JX-e}
       (JX^{\xi} F)(x, \xi) = e^s (F \circ \Pi_0) (\Pi_0^{-1}(x, \xi)). 
    \end{equation}
Indeed, consider a path $s \mapsto c(s)$ such that $c(0) = x$ and $c'(0) = J X^\xi(x)$. Then, as explained in Section \ref{sec:w^s}, we have 
$e^s (F \circ \Pi_0)(v)=\tfrac{d}{ds}|_{s=0}F(c(s),\xi)=(JX^{\xi} F)(x, \xi). $

Denote by $j(t):=j^s(\varphi_t v)/j^s(v)$ the normalized stable Jacobi function as in \eqref{eq:defIf}.
As in \cite[Proof of Proposition 4.4]{LedShu_conformal}, we consider a Wronskian:
\begin{align*}    
\frac{d}{dt} &(j'(t)q(t) - q'(t)j(t)) = j''(t)q(t) -q''(t) j(t) 
\\&= -q(t) K_0(\gamma_0(t))j(t) + j(t)\big( K_0(\gamma_0(t))q(t)-e^s(\dot \rho_0)(\dot\gamma(t))\big)= -j(t) e^s(\dot \rho_0)(\dot\gamma(t)),
\end{align*}
where we used \eqref{eq:Wronskien} together with the Jacobi equation $j''(t) = - K_0(\gamma_0(t)) j(t)$.
Integrating from $0$ to $\infty$ yields
\[
j'(\infty) q(\infty) - q'(\infty) j(\infty) - (j'(0)q(0) - q'(0) j(0)) = q'(0)  = -\int_0^{\infty} j(t) e^s(\dot \rho_0)(\dot\gamma(t))dt=-I_{\dot{\rho}_0}(v),
\]
where we used that $q(0)=0$ and $j(0)=1$, and that $j(\infty)=0$ and $j'(\infty)=0$ by the Anosov property \eqref{eq:contract} together with the boundedness of $q$ and $q'$. 
This completes the proof.
\end{proof}

In order to prove Proposition \ref{prop:second-term}, we proceed to prove the following.
\begin{proposition}
\label{prop:V}Let $Y^{\xi}$ be the vector field in the statement of Proposition \ref{propDaniel}. With the notations introduced above, we have
  $$\int_{M_0\times \partial \tilde M}\mathrm{Div}(Y^{\xi})(x) \, (\Pi_{0})_*dm_0(x, \xi)= \int_{SM} I_{\dot{\rho}_0}(v) V(w^s)(v) \, dm_0(v).$$
\end{proposition}
 We will need the following lemma.
\begin{lemma}\label{lem:Div0} Fix $\xi \in \partial \tilde M$. Then
    ${\rm Div} (JX^{\xi})(x) = 0$ for all $x\in \tilde M$.
\end{lemma}

\begin{proof}
    We compute the divergence using the orthonormal basis $(X^{\xi}, JX^{\xi})$. 
We have, 
\begin{align*}
    {\rm Div}(JX^{\xi}) &= g_0( \nabla_{X^\xi} JX^{\xi}, X^{\xi} ) + g_0( \nabla_{JX^\xi} JX^{\xi}, JX^{\xi}) \\
    &= g_0( J \nabla_{X^\xi} X^{\xi}, X^{\xi} ) + g_0( J\nabla_{JX^\xi} X^{\xi}, JX^{\xi} ) \\
    &= 0 + g_0( \nabla_{JX^\xi} X^{\xi}, X^{\xi} )= \frac{1}{2} JX^{\xi} g_0( X^{\xi}, X^{\xi} ) = 0,
\end{align*}
where we used the facts that $J$ is parallel, that $J$ is an isometry, as well as the fact that $X^\xi$ is tangent to a geodesic. 
\end{proof} 

\begin{proof}[Proof of Proposition \ref{prop:V}] Note that by \eqref{eq:Y1}, we have $Y^\xi(x) = -I_{\dot \rho_0}(\Pi_{0}^{-1}(x,\xi)) JX^{\xi}(x)$. Let $v=\Pi_0^{-1}(x,\xi)$.
    Using the product rule for the divergence together with Lemma \ref{lem:Div0} and \eqref{eq:JX-e}, we obtain 
    $${\rm Div}(Y^{\xi})(x) = - JX^{\xi}(I_{\dot \rho_0} \circ \Pi_0^{-1})(x)= -e^s(I_{\dot \rho_0})(v).$$ 
    Now, using \eqref{eq:e^s} and \eqref{eq:Adjoints}, we integrate by parts to obtain \begin{align*}
    \int_{M_0\times \partial \tilde M}&\mathrm{Div}(Y^{\xi})(x) \, (\Pi_{0})_*dm_0(x, \xi) = -\int_{SM} e^s(I_{\dot \rho_0})(v)dm_0(v)
    \\&= -\int_{SM} H(I_{\dot \rho_0})(v) +  w^s(v) V(I_{\dot \rho_0})(v)dm_0(v) =  \int_{SM} V(w^s)(v) I_{\dot \rho_0}(v)dm_0(v),
 \end{align*}
 which completes the proof. 
\end{proof}

\begin{proof}[Proof of Proposition \ref{prop:second-term}]
    By Proposition \ref{propDaniel}, we have
    \begin{align*}
        -\int_{M_0 \times \partial \tilde M}\left. \frac{d}{d \eps}\right|_{\eps=0} {\rm Div}(X_{\eps}^{\xi})&\, (\Pi_{0})_* dm_0(x, \xi) = \int_{M_0 \times \partial \tilde M}{\rm Div}(\dot \rho_0 X^{\xi}) - {\rm Div}(Y^{\xi})\, (\Pi_{0})_* dm_0(x, \xi) \\
        &= \int_{M_0 \times \partial \tilde M}\dot \rho_0 {\rm Div}(X^{\xi})+X^\xi(\dot{\rho}_0) - {\rm Div}(Y^{\xi}) \, (\Pi_{0})_* dm_0(x, \xi)
\\&= \int_{M_0 \times \partial \tilde M}\dot \rho_0 {\rm Div}(X^{\xi})- {\rm Div}(Y^{\xi}) \, (\Pi_{0})_* dm_0(x, \xi),
    \end{align*}
    where we used that $m_0$ is invariant under the $g_0$-geodesic flow, see \eqref{eq:Adjoints}.
 Using \eqref{eq:2ndff} for the first term and applying Proposition \ref{prop:V} for the second term finishes the proof. 
\end{proof}

\subsubsection{Computing the variation of the pushforward Liouville measure.}
Next we compute the variation of the pushforward Liouville measure $  (\Pi_{\eps})_*dm_{\eps}$. 
Recall from Section \ref{subsub:Liou-meas} that the probability measure $dm_{\eps}$ is given locally by the product of the normalized Riemannian area $dA_{\eps}$ on the base $M$ together with the normalized spherical Lebesgue measure (arclength) $d\mathrm{Vol}|_{S^{g_\eps}_x M}$ on the circular fibers $S_x^{g_\eps} M$. 
Thus, its pushforward $ (\Pi_{{\eps}})_*dm_{\eps}$ by the projection $\Pi_{\eps}$ to $M_0 \times \partial \tilde M$ has the local product decomposition $d\lambda^{\eps}_x(\xi)dA_{\eps}(x) $, 
where 
\begin{equation}
    \label{eq:visual}
    \lambda^{\eps}_x:=(\Pi_{\eps,x})_*d\mathrm{Vol}|_{S^{g_\eps}_x M},
\end{equation} is the \emph{visual measure}. Recall that the variation of the area form was computed in \eqref{eq:area}. 

Thus, it remains to compute the derivative of the visual measure:
\begin{lemma}\label{lemm:VarLiou}
    Let $f(x, \xi)$ be a  function on $M_0 \times \partial \tilde M$ such that $f\circ \Pi_{0}\in  C^{1}(S^{g_0}M)$. Then
    \[
    \left.\frac{d}{d \eps}\right|_{\eps = 0} \int_{M_0 \times \partial \tilde M} f(x, \xi)  d\lambda^{\eps}_x(\xi)dA_0(x) = \int_{SM} I_{\dot{\rho}_0}(v) V(f\circ \Pi_{0})(v) \, dm_0(v).
    \]
\end{lemma}
\begin{proof}
We first define the fiberwise rescaling:
\begin{align*}
\Psi_{\eps}: S^{g_{\eps}} M \to S^{g_0} M, \qquad
(x,v) \mapsto \left(x,\frac{v}{\Vert v \Vert_{g_0}}\right)=\left(x,e^{\rho_\eps(x)}v\right),
\end{align*} and note that we have $d\mathrm{Vol}|_{S^\eps_x M}=(\Psi_\eps)^*d\mathrm{Vol}|_{S^0_x M}$. In particular, \eqref{eq:visual} implies that 
$\lambda_x^0=(T_\eps)_*\lambda_x^\eps, $
where $T_\eps=\Pi_0 \circ \Psi_{\eps} \circ \Pi_{\eps}^{-1}: M_0 \times \partial \tilde M \rightarrow M_0 \times \partial \tilde M$, i.e., we have the following commutative diagram.
$$\begin{tikzcd}
 M_0\times\partial \tilde {M} \arrow[r, "T_\eps"] \arrow[d, "\Pi_\eps^{-1}"]
& M_0\times\partial \tilde {M} \\
 S^{g_\eps}M \arrow[r, "\Psi_\eps"]
& S^{g_0}M \arrow[u, "\Pi_0"]
\end{tikzcd}$$
Then, doing a change of variables, we obtain
 \begin{align*}
         \left. \frac{d}{d \eps}\right|_{\eps=0} \int_{M_0 \times \partial \tilde M} f(x, \xi)  d\lambda^{\eps}_x(\xi)dA_0(x) &= \left. \frac{d}{d \eps}\right|_{\eps=0} \int_{M_0 \times \partial \tilde M} f \circ T_{\eps}^{-1} (x, \xi)  d \lambda_x^0(\xi)dA_0(x) \\
         &= - \left. \frac{d}{d \eps}\right|_{\eps=0} \int_{M_0 \times \partial \tilde M} f \circ T_{\eps} (x, \xi)  d \lambda_x^0(\xi)dA_0(x),
 \end{align*}
 where the last equality follows from differentiating $f = f \circ T_{\eps} \circ T_{\eps}^{-1}$ and noting that $T_0$ is the identity map. 
Next, note that $\eps \mapsto c_{x,\xi}(\eps):=\Psi_{\eps} \circ \Pi_{\eps}^{-1}(x, \xi) = \Psi_{\eps} (X^{\xi}_{\eps}(x))$ is a $C^1$ curve in $S^0_x \tilde M$ which is tangent at $\eps=0$ to the vertical lift of $Y^{\xi}(x)$; see the second term in \eqref{dotX}. 

The above integrand is thus equal to 
\begin{align*}
 \left. \frac{d}{d \eps}\right|_{\eps=0} (f \circ T_{\eps})(x, \xi)
 =\left. \frac{d}{d \eps}\right|_{\eps=0} (f\circ \Pi_{0})(c_{x,\xi}(\eps))=-I_{\dot{\rho}_0}V(f\circ \Pi_{0}),   
\end{align*}
where we used \eqref{eq:V} to obtain that the vertical lift of $JX^\xi$ is $V.$ This completes the proof.
\end{proof}

\begin{proof}[Proof of Theorem \ref{thm:conf-deriv}]
    We start with \eqref{eq:D^1} and simplify the first, second, and third terms using Lemma \ref{lem:divvar}, Proposition \ref{prop:second-term}, and \eqref{eq:area} together with Lemma \ref{lemm:VarLiou}, respectively. 
\end{proof}

\subsection{Half-orbit integrals}
\label{sec:half}
In order to obtain the positivity of the derivative from Theorem \ref{thm:conf-deriv} for $1/6$-pinched metrics, we will record some properties of the \emph{half-orbit integrals} $I_f$ defined in \eqref{eq:defIf}. We start by noticing that these functions satisfy a differential equation.

\begin{proposition}\label{prop:I_f}
\label{propintegral}
Fix $f\in C^{1+\alpha}(SM)$. 
Let $I_f\in C^0(SM)$ be defined as in \eqref{eq:defIf}.
Then $I_f$ is the unique continuous function which is differentiable in the flow direction satisfying
\begin{equation}
    \label{eq:eqdiffJf}
    (X+w^s)I_f=-e^s(f).
\end{equation}
\end{proposition}

\begin{proof}
    To check \eqref{eq:eqdiffJf}, we apply the pullback of the flow to $I_f$. 
    We have
    \begin{align*}(\varphi_\theta)^*I_f(v)&=\int_{0}^{+\infty}\frac{j^s(v_{\tau+\theta})}{j^s(v_{\theta})}[e^sf](v_{\tau+\theta})d\tau=\frac{1}{j^s(v_\theta)}\int_{\theta}^{+\infty}{j^s(v_{\tau})}[e^sf](v_{\tau})d\tau.
    \end{align*}
    Differentiating at $\theta=0$ gives
    $$X(I_f)=-\frac{X j^s}{j^s}(v)I_f(v)-\left.\frac 1{j^s(v)}j^s(v_\theta)[e^sf](v_\theta)\right|_{\theta=0}. $$
    Using that $X j^s/j^s=w^s$ shows that \eqref{eq:eqdiffJf} holds.
    
    To prove uniqueness, suppose that $u\in C^0(SM)$ is differentiable in the flow direction and solves \eqref{eq:eqdiffJf}. 
    Then $h:=I_f-u\in C^0(SM)$ solves
    $(X+w^s)h=0 $. Multiplying both sides by $h$ and integrating against the Liouville measure gives, using \eqref{eq:Adjoints},
    $$0=\frac 12\int_{SM}X(h^2)hdm_0=-\int_{SM}w^s h^2dm_0. $$
 Since $w^sh^2\leq 0 $, is continuous and since $w^s<0$, this implies $h\equiv 0$.
\end{proof}
Using the Riccati equation \eqref{eq:Riccati}, the function $V(w^s)$ appearing in Theorem \ref{thm:conf-deriv} can be reinterpreted as a half-orbit integral.
\begin{lemma}\label{lem:Vw^s}
    Let $V$ be the vertical vector field in \eqref{eq:V}. We have $I_{w^s}=V(w^s).$
    \end{lemma}
\begin{proof}
Since $VK = 0$, applying $V$ to both sides of the Riccati equation \eqref{eq:Riccati} gives
    $VXw^s=-2w^sVw^s$.
    Next, we use \eqref{eq:comm} to get
    $XVw^s+Hw^s=-2w^sVw^s $,
    which is equivalent to
    $$({X}+w^s)Vw^s=-(H+w^sV)w^s=-e^s(w^s). $$
    Since $V(w^s)\in C^\alpha(SM)$, see Section \ref{sec:w^s}, we deduce Lemma \ref{lem:Vw^s} from Proposition \ref{propintegral}.
\end{proof}
Another direct consequence of Proposition \ref{propintegral} is the following.
\begin{lemma}
\label{lemm:Xf}
    Let $f\in C^{1+\alpha}(SM)$ such that $X(f)\in C^{1+\alpha}(SM)$, then $
        I_{X(f)}=-e^s(f).$
\end{lemma}
\begin{proof}
    Using \eqref{eq:comm}, \eqref{eq:e^s} and \eqref{eq:Riccati}, we first obtain $[X, e^s] = -w^s e^s$. In particular,
    $$-(X+w^s)e^s(f)=-e^s(X(f))+w^se^s(f)-w^se^s(w^s)=-e^s(X(f)), $$
    and we deduce Lemma \ref{lemm:Xf} from the uniqueness statement in Proposition \ref{propintegral}.
\end{proof}

\subsection{Specializing to the normalized Ricci flow}\label{sub:specRicci} In this last subsection, we show Theorem \ref{theo:Liou}. We have $\dot \rho_0 = -(K_0 - \bar{K})$ in Theorem \ref{thm:conf-deriv}, and we obtain
\begin{equation}\label{eq:deriv-app}
    \left.\frac{d}{d \eps}\right|_{\eps = 0} h_{\mathrm{Liou}}(\eps) = \int_{SM}(K_0 - \bar{K})w^sdm -2 \int_{SM} I_{w^s} I_{-K_0}dm,
\end{equation}
where we used Lemma \ref{lem:Vw^s}, the linearity of $f \mapsto I_f$, and the fact that $I_{\bar{K}}=0$.

First, the following lemma shows that the first term appearing in \eqref{eq:deriv-app} is non-negative.
\begin{lemma}
    \label{second}
    For any negatively curved surface $(M,g)$, we have
    $\int_{SM}(K_g-\bar K)w^s_gdm_g\geq 0, $
    with equality if and only if $g$ has constant curvature.
\end{lemma}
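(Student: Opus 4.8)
The plan is to use the Riccati equation \eqref{eq:Riccati} to trade the curvature $K_g$ for powers of $w^s_g$, and thereby recognize the left-hand side as an instance of the Jensen-type inequality in Lemma \ref{lemma: integral inequality}. Set $F := -w^s_g$; by the discussion following \eqref{eq:e^s}, $F$ is positive and of class $C^{1+\alpha}(SM)$, so that $F$ and $F^2$ are in particular $C^1$. Rewriting \eqref{eq:Riccati} in terms of $F$ gives $X(F) = F^2 + K_g$, that is, $K_g = X(F) - F^2$.

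Next I would integrate against the Liouville probability measure $m_g$, which is $\varphi_t$-invariant, so that $\int_{SM} X(u)\,dm_g = 0$ for every $u\in C^1(SM)$ by $X^* = -X$ (see \eqref{eq:Adjoints}). Taking $u = F$ gives $\bar K := \int_{SM}K_g\,dm_g = -\int_{SM}F^2\,dm_g$, and taking $u = F^2$ gives $\int_{SM}X(F)\,F\,dm_g = \tfrac12\int_{SM}X(F^2)\,dm_g = 0$, whence $\int_{SM}K_g F\,dm_g = -\int_{SM}F^3\,dm_g$. Substituting these identities,
\begin{align*}
\int_{SM}(K_g-\bar K)\,w^s_g\,dm_g
&= -\int_{SM}(K_g-\bar K)\,F\,dm_g\\
&= -\int_{SM}K_g F\,dm_g + \bar K\int_{SM}F\,dm_g\\
&= \int_{SM}F^3\,dm_g - \Big(\int_{SM}F^2\,dm_g\Big)\Big(\int_{SM}F\,dm_g\Big)\\
&= \int_{SM}F^2\Big(F-\int_{SM}F\,dm_g\Big)dm_g.
\end{align*}
The right-hand side is $\geq 0$ by Lemma \ref{lemma: integral inequality} applied with $(\Omega,\mu)=(SM,m_g)$ and the non-negative function $F$.

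Finally, for the equality case I would invoke the equality clause of Lemma \ref{lemma: integral inequality}: equality forces $F$ to be $m_g$-a.e. constant, hence (by continuity of $w^s_g$) constant on all of $SM$; plugging $X(w^s_g)=0$ into \eqref{eq:Riccati} then yields $K_g=-(w^s_g)^2$, a constant. Conversely, if $K_g$ is a negative constant then $w^s_g\equiv -\sqrt{-K_g}$ and both sides vanish. I do not anticipate a genuine obstacle here: the only points needing a word of justification are the flow-invariance identities $\int_{SM}X(F)\,dm_g=\int_{SM}X(F^2)\,dm_g=0$, which are immediate from $F,F^2\in C^1(SM)$ and $\varphi_t$-invariance of $m_g$; the actual content of the argument is simply the observation that, after using the Riccati equation, the expression is exactly of the form covered by Lemma \ref{lemma: integral inequality}.
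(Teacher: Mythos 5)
Your proof is correct and follows essentially the same route as the paper: integrate the Riccati equation to identify $\bar K$ with $-\int_{SM}(w^s)^2\,dm$, multiply by $w^s$ and integrate to convert $\int K_g w^s\,dm$ into $-\int (w^s)^3\,dm$ (the paper phrases this as $\tfrac12 X((w^s)^2)=-(w^s)^3-K_gw^s$ being a coboundary), and then apply Lemma \ref{lemma: integral inequality} to $F=-w^s$. The equality discussion also matches the paper's.
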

\begin{proof}
   Integrating both sides of the Riccati equation \eqref{eq:Riccati} gives
    $\bar K=-\int_{SM} (w^s_g)^2 dm_g. $
    Next, multiplying \eqref{eq:Riccati} by $w^s_g$, we get
    $$\frac{1}2X((w_g^s)^2)=-(w_g^s)^3-K_gw_g^s. $$
    Integrating the above equality, we have $\int_{SM}K_gw^s_gdm_g=-\int_{SM}(w^s_g)^3dm_g$. Thus, using also Lemma \ref{lemma: integral inequality}, we obtain
    $$ \int_{SM}(K_g-\bar K)w^s_gdm_g=\int (w_g^s)^2\left(-w_g^s-\int (-w_g^s)dm_g\right)dm_g\geq 0, $$
    with strict inequality if $g$ has non-constant curvature.
\end{proof}

Next, we show that the second term appearing in \eqref{eq:deriv-app} is also non-negative when the metric is $1/6$-pinched.
We will use the results obtained in Section \ref{sec:half}. 
In particular, we have
\begin{equation}
        \label{eq:Y2}
        I_{-K}=-e^s(w^s)+I_{(w^s)^2}.
    \end{equation}
Indeed, this follows from  the Riccati equation \eqref{eq:Riccati}, the linearity of $f\mapsto I_f$, and Lemma~\ref{lemm:Xf} applied to $f=w^s.$ 

Using the differential equations obtained in Proposition \ref{prop:I_f} for $I_{w^s}$ and $I_{(w^s)^2}$ and \eqref{eq:Y2}, we have the following. 
\begin{proposition}
\label{prop:appendix}For any negatively curved metric $g$, we have the identity
    \begin{equation*}
    \label{eq:1/6}
        -  \int_{SM} I_{-K} I_{w^s}dm = \int_{SM} \frac{K}{2(w^s)^3} (I_{(w^s)^2} - w^s I_{w^s})^2dm + \int_{SM} -w^s(I_{w^s})^2 (3 + \frac{K}{2(w^s)^2})dm. 
         \end{equation*}
\end{proposition}

\begin{proof}
Using Proposition \ref{propintegral} and \eqref{eq:Y2}, we compute, using the $X$-invariance of $dm_0,$
\begin{align*}
     - \int_{SM} I_{-K} I_{w^s}dm
    &= \int_{SM} e^s(w^s) I_{w^s} dm- \int_{SM} I_{w^s} I_{(w^s)^2} dm\\
    &= -\int_{SM} (X I_{w^s} + w^s I_{w^s}) I_{w^s} dm- \int_{SM} I_{w^s} I_{(w^s)^2} dm
    \\
    &= \int_{SM} -w^s (I_{w^s})^2 dm
    - \int_{SM} I_{w^s} I_{(w^s)^2}dm.
\end{align*}
To simplify the second term above, we use Proposition \ref{prop:I_f} with $f = (w^s)^2$. This gives 
\begin{align*}
    - \int_{SM}{I_{w^s}}& I_{(w^s)^2} dm= \int_{SM}\frac{I_{w^s}}{w^s} (XI_{(w^s)^2} + 2 w^s e^s(w^s))dm
    \\=&- \int_{SM}X(I_{w^s}/w^s) I_{(w^s)^2}dm + \int_{SM}2  e^s(w^s) I_{w^s}dm \\
    =& - \int_{SM}X(1/w^s) I_{w^s} I_{(w^s)^2}dm - \int_{SM}\frac{1}{w^s} X(I_{w^s}) I_{(w^s)^2}dm+ \int_{SM}- 2 w^s (I_{w^s})^2dm,
\end{align*}
where we integrated by parts for the first term and used the same simplification as above for the last one. 
Next, note that by the Riccati equation \eqref{eq:Riccati}, we have
\[ - \int_{SM}X(1/w^s) I_{w^s} I_{(w^s)^2}dm = - \int_{SM}I_{w^s} I_{(w^s)^2}dm - \int_{SM}\frac{K}{(w^s)^2} I_{w^s} I_{(w^s)^2}dm.\]
Next, Proposition \ref{prop:I_f} with $f = w^s$ gives 
\begin{align*}
 - \int_{SM}\frac{1}{w^s} X(I_{w^s}) I_{(w^s)^2} dm&=  \int_{SM}I_{w^s} I_{(w^s)^2}dm + \int_{SM}\frac{e^s(w^s)}{w^s} I_{(w^s)^2}dm. 
\end{align*}
In total, we have 
\begin{align*}
    - \int_{SM} I_{-K} I_{w^s}dm = -\int_{SM} 3 w^s (I_{w^s})^2dm + \int_{SM}\frac{e^s(w^s)}{w^s} I_{(w^s)^2}dm- \int_{SM}\frac{K}{(w^s)^2} I_{w^s} I_{(w^s)^2}dm. 
\end{align*}
To simplify the second term, we use Proposition \ref{prop:I_f} with $f = (w^s)^2$, which gives
\begin{align*}
    - 2 \int_{SM}&\frac{e^s(w^s)}{w^s} I_{(w^s)^2}dm = \int_{SM}\frac{(I_{(w^s)^2})^2}{w^s}dm  + \int_{SM}\frac{X\big((I_{(w^s)^2})^2\big)}{2} \frac{1}{(w^s)^2}dm \\
    &= \int_{SM}\frac{(I_{(w^s)^2})^2}{w^s}dm + \int_{SM}(I_{(w^s)^2})^2 \frac{X(w^s)}{(w^s)^3}dm \\
    &= \int_{SM}\frac{(I_{(w^s)^2})^2}{w^s} \big( 1 -  \frac{(w^s)^2 + K}{(w^s)^2} \big)dm=- \int_{SM}\frac{K}{(w^s)^3} (I_{(w^s)^2})^2dm,
\end{align*}
where we integrated by parts and used \eqref{eq:Riccati}.
Hence, completing the square gives
\begin{align*}
    - \int_{SM}& I_{-K} I_{w^s}dm = \int_{SM}- 3 w^s (I_{w^s})^2 dm+ \int_{SM}\frac{K}{2 (w^s)^3} ((I_{(w^s)^2})^2  - 2 w^s I_{w^s} I_{(w^s)^2})dm \\
    =& \int_{SM}- 3 w^s (I_{w^s})^2dm + \int_{SM}\frac{K}{2 (w^s)^3} (I_{(w^s)^2}  - w^s I_{w^s})^2dm - \int_{SM}\frac{K}{2 (w^s)^3} (w^s)^2 (I_{w^s})^2dm \\
    =& \int_{SM}-w^s (I_{w^s})^2 (3 + \frac{K}{2 (w^s)^2})dm + \int_{SM}\frac{K}{2 (w^s)^3} (I_{(w^s)^2}  - w^s I_{w^s})^2dm.
\end{align*}
This completes the proof.
\end{proof}
We can now conclude the proof of Theorem \ref{theo:Liou}, which follows from the following proposition.
\begin{proposition}
\label{1/6PinchProp}
    Suppose that the metric $g$ is $1/6$-pinched. Then if $(g_\eps)_{\eps \geq 0}$ denotes the normalized Ricci flow starting from $g_0=g$, we have
    $$\left.\frac{d}{d \eps} \right|_{\eps = 0}h_{\mathrm{Liou}} (\eps)\geq 0, $$
    with equality if and only if $g$ is hyperbolic.
\end{proposition}
\begin{proof}
    Since the metric is $1/6$-pinched, there exists $C>0$ such that $-C\leq K\leq -\frac{1}{6}C$. Thus, by \cite[Appendix B, Lemma 1]{KniWei}, we have
    $\frac{1}{6}C\leq (w^s)^2\leq C$. In particular, 
    $$3+\frac 12 \cdot\frac{K}{(w^s)^2}\geq 3-\frac{1}{2}\cdot\frac{6C}{C}=0. $$
 By Proposition \ref{prop:appendix}, we see that $-\int_{SM} I_{w^s} I_{-K}dm\geq 0.$
    To conclude, we use \eqref{eq:deriv-app} and Lemma \ref{second}.
\end{proof}
\begin{proof}[Proof of Theorem \ref{theo:Liou}]
Let $(M,g_0)$ be a $1/6$-pinched metric. By \cite[Theorem 3.3]{hamilton1988ricci}, the pinching is preserved along the normalized Ricci flow, i.e., for any $\eps\geq 0$, the metric $g_\eps$ is also $1/6$-pinched. As a result, Theorem \ref{theo:Liou} follows from Proposition \ref{1/6PinchProp}. 
\end{proof}

\bibliography{references}{}
\bibliographystyle{alpha}

\end{document}